\newtheorem{theorem}{Theorem}[section]
\newtheorem{lemma}{Lemma}[section]
\newtheorem{corollary}[theorem]{Corollary}
\theoremstyle{definition}
\newtheorem{remark}[theorem]{Remark}
\newtheorem{comment}[theorem]{Comment}
\numberwithin{equation}{section}
\newcommand{\Z}{\mathbb Z}
\renewcommand{\:}{\colon}
\renewcommand{\>}{\rightarrow}
\title{Criteria of ergodicity for $p$-adic dynamical systems in terms of coordinate functions}
\author{Andrei Khrennikov\footnote{Corresponding author.} and Ekaterina Yurova\\ International Center for Mathematical Modelling\\
in Physics and Cognitive Sciences\\ 
Linnaeus University, V\"axj\"o, S-35195, Sweden\\
andrei.khrennikov@lnu.se}
\begin{document}
\maketitle

\begin{abstract}
This paper is devoted to the problem of ergodicity of $p$-adic dynamical systems. Our aim is to present criteria
of ergodicity in terms of coordinate functions corresponding to digits in the canonical expansion of $p$-adic
numbers. The coordinate representation can be useful, e.g., for applications to cryptography. Moreover, by using this representation we can consider non-smooth $p$-adic transformations. The basic technical tools
are van der Put series and usage of algebraic structure (permutations) induced by coordinate functions with partially frozen
variables. We illustrate the basic theorems by presenting concrete classes of ergodic functions. 
\end{abstract}

\section{Introduction}

Algebraic and arithmetic dynamics are actively developed fields of general theory of dynamical systems. The bibliography collected by Franco Vivaldi \cite{Vivaldi}contains 216 articles and books; extended bibliography also can be found in books of Silverman \cite{Silverman} and Anashin and Khrennikov \cite{ANKH}. Such studies are based on combination of number theory and theory of dynamical 
systems. And, as it often happens in mathematics, combination can induce novel constructions interesting for both areas of research.
By Ostrowsky theorem \cite{Schikhof} fields of the real and $p$-adic numbers are the most natural completions of the field of rational numbers ${\bf Q}.$ This is one of (purely mathematical) motivations to study dynamical systems in ${\bf Q}_p,$ see. e.g.,  
 \cite{Albeverio1}--\cite{Vivaldi2} (the complete list of reference would be very long; 
hence, we refer to \cite{Vivaldi}, \cite{ANKH}, \cite{Khrennikov and Nilsson}). We can also mention applications of 
$p$-adic dynamical system in physics, cognitive science and genetics \cite{KHRBOOK}, \cite{CONS} \cite{DUB},
\cite{Albeverio1}, \cite{Albeverio2}, \cite{Gen1}--\cite{Gen3} .

This paper is devoted to the problem of ergodicity of $p$-adic dynamical systems. Our aim is to present criteria
of ergodicity in the terms of the coordinate functions corresponding to the digits in the canonical expansion of $p$-adic
numbers. We remind that any $p$-adic integer (an element of the ring ${\bf Z}_p)$ can be expanded into the series:
\begin{equation}
\label{111}
x=x_0 + x_1 p + ...+ x_k p^k+..., \;\; x_j \in\{0,1,...,p-1\}.
\end{equation}
Hence, any map $f: {\bf Z}_p \to {\bf Z}_p$ can be represented in the form:
Any function $f\:\Z_p\>\Z_p$ can be represented as
\[
f(x)=\delta_0 (x)+ p\delta_1 (x)+ \ldots+ p^k \delta_k (x)+\ldots ,
\]
where functions $\delta_k (x), k=0,1,2,\ldots$ are $k$th digits in a base-$p$ expansion of the number $f(x),$ i.e. $\delta_k \: \Z_p\rightarrow \left\{0,1,\ldots,p-1\right\}.$
Our aim to find criteria of measure-preserving and ergodicity in terms of functions
$\delta_k.$ We restrict our study to the class of so called {\it compatible functions} or in terms of $p$-adic analysis 
1-Lipschitz functions, see Anashin \cite{Unif0}--\cite{Erg} and also the monograph \cite{ANKH}.  
The coordinate representation can be useful, e.g., for applications to cryptography. Moreover, by using this representation we can consider non-smooth $p$-adic transformations.

The basic technical tools are van der Put series and usage of 
algebraic structure (permutations) induced by coordinate functions with partially frozen variables. 
(In section \ref{TITI} we shall explain this technique in details.) 

The representation of the function by the {\it van der Put series}  is actively used in
$p$-adic analysis, see e.g. Mahler \cite{Mahler} and Schikhof \cite{Schikhof}. 
Marius van der Put introduced this series in his dissertation ``Algebres de fonctions continues p-adiques'' at Utrecht Universiteit in 1967, \cite{vdp}. There are numerous results in studies of functions with zero derivatives, antiderivation \cite{Schikhof} obtained using van der Put series. Later van der Put basis was adapted to the case of $n$-times continuously differentiable functions in one and several variables \cite{DeSmedt}. First results on applications of the van der Put series
in theory of $p$-adic dynamical systems, the problems of ergodicity and measure preserving,  were obtained in 
\cite{DAN}, see also \cite{Ya}, \cite{2},  \cite{Ya0}, \cite{MeraJNT}.  In this paper we apply this technique to find criteria of ergodicity in terms of the coordinate representation.

In some theorems (Proposition 3.1 and 3.4) conditions of ergodicity are formulated in terms of integral sums for the 
Volkenborn integral, see, e.g., Schikhof \cite{Schikhof}. One can expect to find formulations of ergodicity in terms of this
integral (playing an important role in number theory). However, at the moment this is an open problem. 

We illustrate the basic theorems by presenting concrete classes of ergodic functions which satisfy conditions of these theorems.

\section{$P$-adics}

Let $p>1$ be an arbitrary prime number. The ring of $p$-adic integers is denoted by  the symbol $\Z_p.$ The $p$-adic valuation
is denoted by $\vert \cdot\vert_p.$ We remind that this valuation satisfies the {\it strong triangle inequality}:
$$
\vert x+y\vert_p \leq \max [\vert x\vert_p, \vert y\vert_p].
$$
This is the main distinguishing property of the $p$-adic valuation inducing essential departure from the real or complex analysis
(and hence essential difference of  $p$-adic dynamical systems from  real and complex dynamical systems).

The van der Put series are defined in the following way. Let $f\:\Z_p\>\Z_p$ be a continuous function. Then there exists a unique sequence of $p$-adic coefficients
$B_0,B_1,B_2, \ldots$ such that 
\begin{eqnarray}
f(x)=\sum_{m=0}^{\infty}
B_m \chi(m,x) 
\end{eqnarray}
for all $x \in \Z_p.$
Here the \emph{characteristic function} $\chi(m,x)$ is given by
\begin{displaymath}
\chi(m,x)=\left\{ \begin{array}{cl}
1, \mbox{if}
\ \left|x-m \right|_p \leq p^{-n} \\
0,  \mbox{otherwise}
\end{array} \right.
\end{displaymath}
where $n=1$ if $m=0$, and $n$ is uniquely defined by the inequality 
$p^{n-1}\leq m \leq p^n-1$ otherwise (see Schikhof's book \cite{Schikhof} for detailed presentation 
of theory of van der Put series). 

 Let $f\:\Z_p\>\Z_p$ be a function  and
let $f$ satisfy the {\it Lipschitz condition with constant $1$} (with respect to the $p$-adic valuation $\left|\cdot \right|_p):$
 $$\left| f(x)- f(y)\right|_p \leq \left|x-y \right|_p$$
for all $x,y \in \Z_p.$ 

We state again that a mapping of an algebraic system $A$ to itself is called {\it compatible} if it preserves all the congruences of $A.$ 
It is easy to check that a map $f: \Z_p \to \Z_p$ is Lipschitz one iff it is compatible (with respect to $\rm{mod}\; p^k, k=1,2,...$
congruences).

By Proposition 3.35 [p. 63, \cite{ANKH}] the function $f\:\Z_p\>\Z_p$ is compatible iff each coordinate function $\delta_k, k=0,1,2,\ldots$ does not depend on 
$x_{k+1}, x_{k+2}, \ldots,$ where $x=x_0+px_1+p^2x_2+\ldots$ is canonical representation of $p$-adic number $x.$
Thus the compatible function $f\:\Z_p\>\Z_p$ has the following coordinate representation:
\begin{equation}
\label{coordf}
f(x_0+px_1+\ldots +p^kx_k+ \ldots)=\varphi_0(x_0)+p\varphi_1(x_0,x_1)+\ldots+p^k\varphi_k(x_0,x_1,\ldots,x_k)+\ldots\;,
\end{equation}
where $\varphi_k=\varphi_k(x_0,x_1,\ldots,x_k)$ are $p$-valued functions that depend on $p$-valued variables $x_0,x_1,\ldots,x_k,$ $k=0,1,2,\ldots .$

The space $\Z_p$ is equipped by the natural probability measure, namely, the Haar measure $\mu_p$ normalized
so that $\mu_p(\Z_p) = 1.$ 

Recall that a mapping $f\:\mathbb S\rightarrow \mathbb S$ of a measurable space $\mathbb S$ with a probability measure $\mu$ is called measure preserving if 
$\mu(f^{-1}(S))=\mu(S)$ for each measurable subset $S\subset \mathbb S.$

\section{General criteria of ergodicity for any prime $p.$}
\label{TITI}

\begin{theorem}
\label{measurecoord}
Let the $p$-adic compatible function $f\:\Z_p\>\Z_p$ has the coordinate representation (\ref{coordf}).
The function $f$ preserves measure if and only if
\begin{enumerate}
\item
$\varphi_0$ is bijective on the set of residues modulo $p;$
\item  $\varphi_k(\bar{x}_{k-1},x_k)$ is bijective on the set of residues modulo $p$ for any $k=1,2,\ldots$ and fixed values $\left(x_0,x_1,\ldots,x_{k-1}\right)=\bar{x}_{k-1}.$
\end{enumerate}
\end{theorem}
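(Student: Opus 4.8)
The plan is to characterize measure preservation for compatible maps through the coordinate functions $\varphi_k$, exploiting the fact that the Haar measure on $\Z_p$ decomposes as a product over digits. Since $f$ is compatible (1-Lipschitz), it induces for each $k$ a well-defined map on $\Z/p^{k+1}\Z$, namely the reduction $x \bmod p^{k+1} \mapsto f(x) \bmod p^{k+1}$. The key observation is that $f$ preserves $\mu_p$ \emph{if and only if} each of these induced maps on $\Z/p^{k+1}\Z$ is a bijection (equivalently, a permutation of residues modulo $p^{k+1}$). This equivalence is standard: the balls of radius $p^{-(k+1)}$ form a partition of $\Z_p$ into $p^{k+1}$ sets of equal measure $p^{-(k+1)}$, and since $f$ is compatible it maps each such ball into a single ball of the same radius; $f$ preserves measure exactly when this assignment of balls to balls is a bijection for every $k$.

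**First** I would reduce the problem to the induced maps on residue rings and show that bijectivity of the reduction modulo $p^{k+1}$ for all $k$ is equivalent to measure preservation. Then I would translate the bijectivity of the reduction map on $\Z/p^{k+1}\Z$ into a statement about the coordinate functions. Writing $x = x_0 + p x_1 + \cdots + p^k x_k \pmod{p^{k+1}}$ and using the representation (\ref{coordf}), the image modulo $p^{k+1}$ is determined precisely by the tuple $\bigl(\varphi_0(x_0), \varphi_1(x_0,x_1), \ldots, \varphi_k(x_0,\ldots,x_k)\bigr)$. The map on residues modulo $p^{k+1}$ is therefore a bijection if and only if the map sending the tuple of digits $(x_0, \ldots, x_k)$ to the tuple of values $(\varphi_0, \ldots, \varphi_k)$ is a bijection on $\{0,\ldots,p-1\}^{k+1}$.

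**The main step** is then a triangular (lower-triangular) induction on $k$. The structure is that $\varphi_j$ depends only on $x_0, \ldots, x_j$, so the map on digit-tuples has a triangular form. For the base case, bijectivity modulo $p$ is exactly the condition that $\varphi_0$ permutes $\{0,\ldots,p-1\}$, giving condition (1). For the inductive step, assuming the map on $(x_0,\ldots,x_{k-1})$-tuples is already a bijection, I would argue that the extended map on $(x_0,\ldots,x_k)$-tuples is a bijection if and only if, for each fixed prefix $\bar{x}_{k-1}=(x_0,\ldots,x_{k-1})$, the last coordinate map $x_k \mapsto \varphi_k(\bar{x}_{k-1}, x_k)$ is a bijection of $\{0,\ldots,p-1\}$. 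This is precisely condition (2). The forward and backward implications both follow from the triangular structure: a triangular map is bijective iff each of its ``diagonal'' slices is bijective.

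**The main obstacle** I anticipate is making the triangular decomposition fully rigorous, in particular verifying carefully that the value of $\varphi_k$ at the correct prefix is what controls the $k$th digit of $f(x) \bmod p^{k+1}$, without interference from carries or from lower-order terms. Because the representation (\ref{coordf}) is genuinely ``digit-by-digit'' for compatible functions --- there is no carrying between the $p^k$-levels in this coordinate form --- this interference does not in fact occur, but one must state this cleanly. The argument hinges on the fact, already recorded in the excerpt via Proposition 3.35 of \cite{ANKH}, that $\delta_k$ (equivalently $\varphi_k$) does not depend on higher digits; this is exactly what guarantees the map on digit-tuples is lower-triangular and lets the induction close.
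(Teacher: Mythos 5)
Your proposal is correct, but it follows a genuinely different route from the paper's own proof. The paper works through van der Put series: it computes the van der Put coefficients $b_m$ of $f$, shows that $b_{\bar{x}+p^nx_n}(f)\equiv \varphi_n(\bar{\bar{x}},x_n)-\varphi_n(\bar{\bar{x}},0) \pmod p$, and then invokes the measure-preservation criterion of Theorem 2.1 of \cite{MeraJNT} (stated in terms of those coefficients: the first $p$ coefficients form a complete residue system mod $p$, and the higher blocks consist of nonzero residues), translating its conditions into bijectivity of $\varphi_0$ and of the slices $\varphi_k(\bar{x}_{k-1},\cdot)$. You instead bypass van der Put series entirely: you use the classical characterization that a $1$-Lipschitz map preserves Haar measure if and only if its reduction modulo $p^{k+1}$ is a bijection of $\Z/p^{k+1}\Z$ for every $k$ (your ball-counting sketch of this is sound, and it is a known result recorded in \cite{ANKH}), and then run a triangular induction on the digit-tuple map $(x_0,\ldots,x_k)\mapsto(\varphi_0(x_0),\ldots,\varphi_k(x_0,\ldots,x_k))$, where bijectivity of the full map is equivalent to bijectivity of the prefix map together with bijectivity of each last-coordinate slice. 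Both arguments lean on one external criterion --- the paper on \cite{MeraJNT}, you on the bijectivity-mod-$p^k$ characterization --- so neither is strictly more self-contained; what your version buys is elementary transparency (the triangular structure does all the work, no series expansion needed), while the paper's version buys coherence with the rest of the article, whose later proofs (e.g., Lemma \ref{bezcoord}) reuse the same van der Put coefficient computations. Your anticipated obstacle about carries is indeed a non-issue for exactly the reason you give: in the representation (\ref{coordf}) the $\varphi_k$ \emph{are} the digits of $f(x)$, so the induced map on $\Z/p^{k+1}\Z$ is literally the triangular digit map.
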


\begin{proof}
Let $f(x)=\sum_{m=0}^{\infty} B_m \chi(m,x)=\sum_{m=0}^{\infty} p^{\left\lfloor \log_p m \right\rfloor} b_m \chi(m,x)$ be the van der Put representation of the function $f$ and
$\bar{x}=x_0+px_1+\ldots +p^{n-1}x_{n-1},$ $\bar{\bar{x}}=\left(x_0,x_1,\ldots,x_{n-1}\right),$ $x_i \in \left\{0,1,\ldots,p-1\right\}.$
Let us find values of the coefficients $b_m,$ where $m=x_0+px_1+\ldots +p^{n-1}x_{n-1}+p^nx_n=\bar{x}+p^nx_n,$ $x_n\neq0.$
Then $b_{x_0}(f)=B_{x_0}(f)=f(x_0),$ $x_0\in \left\{0,1,\ldots,p-1\right\},$

and 
$$
b_{\bar{x}+p^nx_n}(f)=\frac{1}{p^n}B_{\bar{x}+p^nx_n}(f)= \frac{f(\bar{x}+p^nx_n)-f(\bar{x})}{p^n}
$$
$$
=\left(\varphi_n(\bar{\bar{x}},x_n)-\varphi_n(\bar{\bar{x}},0)\right) + p \left(\varphi_{n+1}(\bar{\bar{x}},x_n,0)-\varphi_{n+1}(\bar{\bar{x}},0,0)\right)+\ldots
$$

Thus 
\begin{align*}
&b_{x_0}(f)\equiv \varphi_0(x_0) \bmod p; \\
&b_{\bar{x}+p^nx_n}(f)\equiv \varphi_n(\bar{\bar{x}},x_n)-\varphi_n(\bar{\bar{x}},0) (\bmod p).
\end{align*}

By Theorem 2.1 \cite{MeraJNT} a function $f$ preserves measure if and only if
\begin{enumerate}
\item $b_0(f),b_1(f),\ldots, b_{p-1}(f)$ establish a complete set of residues modulo $p;$
\item $b_{\bar{x}+p^n}(f), \ldots ,b_{\bar{x}+p^n(p-1)}(f)$
 are all nonzero residues modulo $p.$
\end{enumerate}

Then the first condition is equivalent to bijectivity of $\varphi_0$ and the second condition to bijectivity of $\varphi_n(\bar{\bar{x}},h),$ $h\in \left\{0,1,\ldots,p-1\right\}.$

\end{proof}

Let us introduce new notations for compatible measure-preserving function $f\:\Z_p\>\Z_p,$ which has coordinate form (\ref{coordf}).
By Theorem \ref{measurecoord} for any $k=0,1,2,\ldots$ and any fixed $\bar{x}_{k-1}=(x_0,x_1,\ldots ,x_{k-1})$ the function $\varphi_k(\bar{x}_{k-1},x_k)$
is a bijective transformation of the set of residues modulo $p,$ i.e. defines permutation on the set $\left\{0,1,\ldots,p-1\right\}.$
Such permutation we denote as $\varphi_{k,\bar{x}_{k-1}}.$ Product of permutations denote as ``$\circ$'', i.e. 
$\varphi_{k,\bar{y}_{k-1}}\circ \varphi_{k,\bar{x}_{k-1}}=\varphi_{k}\left(\bar{y}_{k-1},\varphi_{k}\left(\bar{x}_{k-1},x_k\right)\right),$
where $x_k\in \left\{0,1,\ldots,p-1\right\}.$

The function $f$ induces a transformation of the residue ring modulo $p^k,$ i.e. $f_k\: x \bmod p^{k+1}\longmapsto f(x) \bmod p^{k+1}$ 
(correctness of this map follows from compatibility of $f$).
Symbol $f_k$ we also will use to denote coordinate form of its representation, i.e.
$f_k\:(x_0,x_1,\ldots,x_k)\longmapsto \left(\varphi_0(x_0),\varphi_1(x_0,x_1),\ldots, \varphi_k(x_0,x_1,\ldots,x_k)\right).$

Now let us state a general criterion of ergodicity for a compatible function in terms of the coordinate functions.

\begin{theorem}
\label{generg}
Let the $p$-adic compatible function $f\:\Z_p\>\Z_p$ be presented in the coordinate form (\ref{coordf}),
where $\varphi_0$ and $\varphi_{k,\bar{x}_{k-1}},
k=1,2,\ldots,$ be permutations on the set of residues modulo $p.$
The function $f$ is ergodic if and only if
\begin{enumerate}
\item the map $\varphi_0$ is transitive on the set of residues modulo $p;$
\item the permutation $$F_{k,\bar{x}_{k-1}}=\varphi_{k,f_{k-1}^{(p^k-1)}(\bar{x}_{k-1})} \circ \varphi_{k,f_{k-1}^{(p^k-2)}(\bar{x}_{k-1})} \circ\ldots\circ \varphi_{k,\bar{x}_{k-1}}$$
is transitive on the set of residues modulo $p$ for any $k=1,2,\ldots ,$ where $f_k^{(s)}\left(\bar{x}_{k-1}\right)=\underbrace{f_k(f_k(\ldots f_k(\bar{x}_{k-1}))\ldots )}_{s},$
$f_k=f \bmod p^{k+1}$ and $f_0=\varphi_0.$
\end{enumerate}
\end{theorem}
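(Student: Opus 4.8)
The plan is to reduce ergodicity of the $p$-adic system to a purely combinatorial statement about the induced finite maps $f_k$ on the residue rings $\Z/p^{k+1}\Z$, and then to analyze the cycle structure of these finite maps by exploiting the skew-product form that the coordinate representation (\ref{coordf}) forces.

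First I would recall the standard reduction available in the van der Put / Anashin framework (e.g. \cite{ANKH}, \cite{MeraJNT}): a measure-preserving $1$-Lipschitz map $f\:\Z_p\>\Z_p$ is ergodic if and only if for every $n\geq 1$ the induced map $f_{n-1}$ on $\Z/p^n\Z$ is transitive, i.e. is a single cycle of length $p^n$. The point is that $\Z_p$ is the projective limit of the $\Z/p^n\Z$, the balls of radius $p^{-n}$ are exactly the fibres of the reduction maps, and a single-cycle (minimal) action on each level forces every ball to be visited with the correct frequency, which for the normalized Haar measure is equivalent to ergodicity. This turns the problem into determining when $f_k$ is a $p^{k+1}$-cycle for all $k$.

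Next I would use that (\ref{coordf}) makes $f_k$ a skew product over $f_{k-1}$:
$$f_k(\bar{x}_{k-1},x_k)=\bigl(f_{k-1}(\bar{x}_{k-1}),\,\varphi_k(\bar{x}_{k-1},x_k)\bigr),$$
with base $\Z/p^k\Z$ and fibre $\left\{0,\ldots,p-1\right\}$. Assuming inductively that $f_{k-1}$ is a single cycle of length $p^k$, the base is one orbit, so after exactly $p^k$ iterations the base coordinate returns to $\bar{x}_{k-1}$ for the first time. During one such loop the fibre coordinate is transformed by the successive permutations $\varphi_{k,\bar{x}_{k-1}}, \varphi_{k,f_{k-1}(\bar{x}_{k-1})}, \ldots, \varphi_{k,f_{k-1}^{(p^k-1)}(\bar{x}_{k-1})}$, so the first-return map of the fibre is precisely $F_{k,\bar{x}_{k-1}}$ as defined in the statement. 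A direct computation then shows that the $f_k$-orbit length of $(\bar{x}_{k-1},x_k)$ equals $p^k$ times the $F_{k,\bar{x}_{k-1}}$-cycle length of $x_k$; hence $f_k$ is a single $p^{k+1}$-cycle if and only if $f_{k-1}$ is a single $p^k$-cycle and $F_{k,\bar{x}_{k-1}}$ is a $p$-cycle, i.e. transitive on $\left\{0,\ldots,p-1\right\}$. I would also record that the return maps at different base points of the single orbit are conjugate, namely $F_{k,f_{k-1}(\bar{x}_{k-1})}=\varphi_{k,\bar{x}_{k-1}}\circ F_{k,\bar{x}_{k-1}}\circ\varphi_{k,\bar{x}_{k-1}}^{-1}$, so their cycle type does not depend on $\bar{x}_{k-1}$; transitivity for one base point is equivalent to transitivity for all, which reconciles the ``for any fixed $\bar{x}_{k-1}$'' phrasing.

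Assembling these by induction on $k$ — with base case $k=0$ being that $f_0=\varphi_0$ is a $p$-cycle, i.e. condition 1 — yields that $f_k$ is transitive on $\Z/p^{k+1}\Z$ for all $k$ if and only if $\varphi_0$ is transitive and every $F_{k,\bar{x}_{k-1}}$ is transitive; combined with the first reduction this is exactly the claimed equivalence. The main obstacle I anticipate is the orbit-length computation for the skew product, namely establishing that the $f_k$-period of $(\bar{x}_{k-1},x_k)$ factors as $p^k$ times the period of $x_k$ under $F_{k,\bar{x}_{k-1}}$; this relies essentially on $f_{k-1}$ being a single cycle, so that the base period is exactly $p^k$ with no earlier partial returns. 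Everything else is bookkeeping with the permutation notation $\varphi_{k,\bar{x}_{k-1}}$ and the composition ``$\circ$''.
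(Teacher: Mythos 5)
Your proposal is correct and follows essentially the same route as the paper: both reduce ergodicity to transitivity of every induced map $f_k$ on $\Z/p^{k+1}\Z$ (via the results cited from \cite{ANKH}), identify $F_{k,\bar{x}_{k-1}}$ as the first-return permutation of the fibre in the skew product over $f_{k-1}$, and conclude by induction, your orbit-length factorization being the same argument the paper phrases as minimality of the return time in $f_k^{(p^{k+1})}=\left(\epsilon,\ldots,\epsilon\right)$. Your conjugacy observation for different base points is exactly the content of the paper's Remark \ref{permvibor}, which the paper states separately and uses to obtain Theorem \ref{generg2}.
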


\begin{proof}
Let us find the coordinate representation for $f_k^{(p^k)}(\bar{x}_k),$ $k=1,2,\ldots .$
Note that from compatibility of $f$ it  follows that $f_k \bmod p^k=f_{k-1}.$ Then we have:
$$f_k^{(p^k)}=\left(F_0, F_{1,\bar{x}_0},\ldots, F_{{k-1},\bar{x}_{k-2}}, F_{k,\bar{x}_{k-1}} \right),$$
where 
\begin{align}
\label{coordpr}
&F_0=\underbrace{\varphi_0 \circ \varphi_0 \circ \ldots  \circ \varphi_0}_{p^{k-1}}; \\
&F_{1,\bar{x}_0}= \varphi_{1,f_1^{(p-1)}(\bar{x}_0)} \circ \varphi_{1,f_1^{(p-2)}(\bar{x}_0)} \circ \ldots \circ \varphi_{1,\bar{x}_0}; \nonumber\\
&\ldots \nonumber\\
&F_{{k-1},\bar{x}_{k-2}}= \varphi_{{k-1},f_{k-2}^{(p^{k-1}-1)}(\bar{x}_{k-2})} \circ \varphi_{{k-1},f_{k-2}^{(p^{k-1}-2)}(\bar{x}_{k-2})} \circ \ldots \circ \varphi_{{k-1},\bar{x}_{k-2}}; \nonumber\\
&F_{k,\bar{x}_{k-1}}= \varphi_{k,f_{k-1}^{(p^k-1)}(\bar{x}_{k-1})} \circ \varphi_{k,f_{k-1}^{(p^k-2)}(\bar{x}_{k-1})} \circ \ldots \circ \varphi_{k,\bar{x}_{k-1}}; \nonumber
\end{align}

\medskip

We start from the proof that conditions (1), (2) are necessary. Assume that $f$ is ergodic.   

By Theorem 4.23 (p.99, \cite{ANKH}) if the function $f$ is ergodic, then, for any $s=0,1,2,\ldots,$ the
function $f_s \: x\bmod p^{s+1}\longmapsto f(x)\bmod p^{s+1}$ is transitive on the set of residues modulo $p^{s+1}$ and, in fact,
$p^{s+1}$ is the minimal integer such that $f_s^{(p^{s+1})} (\bar{x}_{s})=\bar{x}_{s}.$
It means that the permutations $F_0,F_{1,\bar{x}_0},\ldots, F_{{k-1},\bar{x}_{k-2}}$ from the 
coordinate representation of the map $f_k^{(p^k)}$ {\it are identical.} Indeed, in this case each permutation is a degree of the identical permutation $\epsilon.$
As we know,   $f_k^{(p^{k+1})} (\bar{x}_{k})=\bar{x}_{k}$  and  $p^{k+1}$ is minimal.
Then from the representation
\[
f_k^{(p^{k+1})}=f_k^{(p^k)^{(p)}}=\left(\epsilon,\epsilon,\ldots,\epsilon,\underbrace{F_{k,\bar{x}_{k-1}} \circ \ldots \circ F_{k,\bar{x}_{k-1}}}_{p}\right)
\]
it follows that 
\begin{equation}
\label{HH77}
\underbrace{F_{k,\bar{x}_{k-1}} \circ \ldots \circ F_{k,\bar{x}_{k-1}}}_{p}=\epsilon,
\end{equation}
and, moreover,   $p$ is minimal natural number for holding the condition (\ref{HH77}). Hence, the 
 permutation $F_{k,\bar{x}_{k-1}}$ is transitive.
Since $f_0=\varphi_0,$ then transitivity of the map $\varphi_0$ follows from Theorem 4.23 (p.99, \cite{ANKH}).

\medskip
We now present the proof that conditions (1), (2) are sufficient for ergodicity.
We shall use induction with respect to the parameter $k=0,1,\ldots,$ to show 
transitivity of the functions $f_k \: x\bmod p^{k+1}\longmapsto f(x)\bmod p^{k+1}.$

For $k=0$ transitivity of $f_0$ follows from the relation $f_0=\varphi_0$ and transitivity of $\varphi_0$ follows from the first condition of Theorem.

Let $f_{k-1}$ be transitive on the set of residues modulo $p^k.$ Since $f_k \bmod p^k=f_{k-1},$ then $f_k^{(p^k)}=\left(\epsilon,\epsilon,\ldots,\epsilon,F_{k,\bar{x}_{k-1}}\right),$ where $F_{k,\bar{x}_{k-1}}$ is defined in (\ref{coordpr}) and $\epsilon$ is the identity permutation on the set of residues modulo $p.$
By condition (2) of Theorem, the map $F_{k,\bar{x}_{k-1}}$ is the transitive permutation on the set of residues modulo $p.$ 
Therefore,
\[
f_k^{(p^{k+1})}=f_k^{(p^k)^{(p)}}=\left(\epsilon,\epsilon,\ldots,\epsilon,\underbrace{F_{k,\bar{x}_{k-1}} \circ \ldots \circ F_{k,\bar{x}_{k-1}}}_{p}\right)=\left(\epsilon,\epsilon,\ldots,\epsilon\right),
\]
and such $p^{k+1}$ is minimal. It means that $f_k$ is transitive on the set of residues modulo $p^{k+1}.$
Thus we proved that the functions $f_k=f(x)\bmod p^{k+1},$ $k=0,1,\ldots$ are transitive. Then by Proposition 4.35 (p.105, \cite{ANKH}) the function $f$ is ergodic.
\end{proof}

\begin{comment}{\small 
To check a function on ergodicity by using conditions of Theorem \ref{generg}, one should check transitivity of the permutations $F_{k,\bar{x}_{k-1}},$
$k=1,2,\ldots.$ Each permutation is a product of permutations $\varphi_{k,\bar{x}_{k-1}},$ $\bar{x}_{k-1} \in \left\{0,\ldots,p^{k-1}-1\right\}.$ The order of their appearance in the resulting product (i.e. in $F_{k,\bar{x}_{k-1}}$) is 
defined by the sequence of residues modulo $p^k$
$$\tau_k=\left\{\bar{x}_{k-1}, f_{k-1}(\bar{x}_{k-1}),\ldots, f_{k-1}^{(p^k-2)}(\bar{x}_{k-1}),f_{k-1}^{(p^k-1)}(\bar{x}_{k-1})\right\}.$$

In other words, to check transitivity of the function $f_k=f \bmod p^{k+1}$ one should construct the sequence $\tau_k,$
find $F_{k,\bar{x}_{k-1}}$ and verify its transitivity.

Note that the order of  residues modulo $p^k$ in the sequence $\tau_k$ is significantly important.
This is due to the fact that the symmetric group $S_p$ (permutations on $\Z/p\Z$) is nonabelian. Therefore, in general
by determining $F_{k,\bar{x}_{k-1}}$ we cannot change the order of  
the permutations $\varphi_{k,\bar{x}_{k-1}}.$ 

Of course, in some special cases the permutations $\varphi_{k,\bar{x}_{k-1}}$ can commute. Then one can expect 
finding compact conditions of ergodicity for the corresponding class of functions. }
\end{comment}

\begin{remark}
Theorem \ref{generg} can be considered as generalization to the case  $p\not=2$ of 
Theorem 4.39 (Folklore) from the book \cite{ANKH}. 
The latter was ``known'' by people working in cryptography, but 
without formal 2-adic presentation. (And the authors of \cite{ANKH}
do not know any rigorous mathematical proof before the one presented in this book.) 
\end{remark}

\begin{remark}
\label{permvibor}
In fact, by checking the conditions of ergodicity of  Theorem \ref{generg}, one can choose the value of the parameter $\bar{x}_{k-1}=(x_0,x_1,\ldots,x_{k-1})$ in an arbitrary way.

Indeed, suppose that the second condition of Theorem \ref{generg} has been checked for $s=2,\ldots,k-2$ (for $s=1$ the second condition coincide with first). And it has been shown that the permutations $F_{s,\bar{x}_{s-1}}$ are transitive on the set of residues modulo $p^{s+1}.$ Then the functions $f_s$ are transitive modulo $p^{s+1}$ (otherwise the function $f$ is already non-ergodic and it is not necessarily to check the second condition).

In the second condition one has to check transitivity of the permutation
$$F_{k,\bar{x}_{k-1}}=\varphi_{k,f_{k-1}^{(p^{k-1}-1)}(\bar{x}_{k-1})} \circ \ldots\circ \varphi_{k,\bar{x}_{k-1}}$$
for some $\bar{x}_{k-1}=(x_0,x_1,\ldots,x_{k-1}).$

Let $\bar{y}_{k-1}=(y_0,y_1,\ldots,y_{k-1})\neq (x_0,x_1,\ldots,x_{k-1})=\bar{x}_{k-1}.$ We show that the cyclic structure of the permutations  $F_{k,\bar{x}_{k-1}}$ and $F_{k,\bar{y}_{k-1}}$ coincide.
By our assumption the function $f_{k-1}$ is transitive modulo $p^k.$ Then there exist an integer $r\leq p^{k-1}-1$ such that 
$f_{k-1}^{(r)}(\bar{x}_{k-1})=\bar{y}_{k-1}$ and, therefore, $f_{k-1}^{(t)}(\bar{y}_{k-1})=f_{k-1}^{(t+r)}(\bar{x}_{k-1}).$
Then
\begin{eqnarray}
\label{perm}
F_{k,\bar{x}_{k-1}}=\varphi_{k,f_{k-1}^{(p^{k-1}-1)}(\bar{x}_{k-1})} \circ \ldots \circ \varphi_{k,f_{k-1}^{(r+1)}(\bar{x}_{k-1})} \circ \varphi_{k,f_{k-1}^{(r)}(\bar{x}_{k-1})} \circ\ldots \nonumber\\
\ldots\circ \varphi_{k,\bar{x}_{k-1}}= \nonumber\\
=\varphi_{k,f_{k-1}^{(p^{k-1}-1-r)}(\bar{y}_{k-1})} \circ \ldots \circ \varphi_{k,f_{k-1}^{(2)}(\bar{y}_{k-1})} \circ \varphi_{k,f_{k-1}(\bar{y}_{k-1})} \circ \varphi_{k,f_{k-1}^{(r-1)}(\bar{x}_{k-1})} \circ\ldots \nonumber\\
\ldots\circ \varphi_{k,\bar{x}_{k-1}}.\nonumber\\
\end{eqnarray}

Consistently making conjugation of the permutation (\ref{perm}) by permutations \\
$\varphi_{k,\bar{x}_{k-1}},\ldots, \varphi_{k,f_{k-1}^{(r-1)}(\bar{x}_{k-1})},$ we obtain

\begin{align*}
&\varphi_{k,f_{k-1}^{(r-1)}(\bar{x}_{k-1})} \circ \ldots \circ \varphi_{k,\bar{x}_{k-1}} \circ F_{k,\bar{x}_{k-1}} 
\circ (\varphi_{k,\bar{x}_{k-1}})^{-1} \circ \ldots \circ (\varphi_{k,f_{k-1}^{(r-1)}(\bar{x}_{k-1})} )^{-1}= \\
&=\varphi_{k,f_{k-1}^{(r-1)}(\bar{x}_{k-1})}\circ \ldots \circ \varphi_{k,\bar{x}_{k-1}} \circ \varphi_{k,f_{k-1}^{(p^{k-1}-1-r)}(\bar{y}_{k-1})}\circ \ldots\circ \varphi_{k,f_{k-1}(\bar{y}_{k-1})}= \\
&=\varphi_{k,f_{k-1}^{(p^k-1)}(\bar{y}_{k-1})} \circ \ldots \circ \varphi_{k,f_{k-1}^{(p^k-r)}(\bar{y}_{k-1})} \circ 
\varphi_{k,f_{k-1}^{(p^k-1-r)}(\bar{y}_{k-1})}\circ \ldots \\
&\ldots\circ \varphi_{k,f_{k-1}(\bar{y}_{k-1})}= F_{k,\bar{y}_{k-1}}.
\end{align*}

Let $G=\varphi_{k,f_{k-1}^{(r-1)}(\bar{x}_{k-1})} \circ \ldots \circ \varphi_{k,\bar{x}_{k-1}}.$
Then $G \circ F_{k,\bar{x}_{k-1}} \circ G^{-1}=F_{k,\bar{y}_{k-1}},$
i.e., the permutations $F_{k,\bar{x}_{k-1}},$ $F_{k,\bar{y}_{k-1}}$ are conjugate permutations.
It is well known  that cyclic structure of conjugate permutations coincide.
It means that the property of transitivity for permutation $F_{k,\bar{x}_{k-1}}$ does not depend on the choice of $\bar{x}_{k-1}.$
\end{remark}

Taking into account Remark \ref{permvibor} and Theorem \ref{generg}, we state the following theorem.

\begin{theorem}
\label{generg2}
Let the $p$-adic compatible function $f\:\Z_p\>\Z_p$ be presented in the coordinate form (\ref{coordf}),
where $\varphi_0$ and $\varphi_{k,\bar{x}_{k-1}},
k=1,2,\ldots,$ are permutations on the set of residues modulo $p.$
The function $f$ is ergodic if and only if
\begin{enumerate}
\item  the map $\varphi_0$ is transitive on the set of residues modulo $p;$
\item the permutations $$F_{k,0}=\varphi_{k,f_{k-1}^{(p^k-1)}(0)} \circ \varphi_{k,f_{k-1}^{(p^k-2)}(0)} \circ\ldots\circ \varphi_{k,0}$$
are transitive on the set of residues modulo $p,$ for $k=1,2,\ldots ,$ where $f_k^{(s)}\left(\bar{x}_{k-1}\right)=\underbrace{f_k(f_k(\ldots f_k(\bar{x}_{k-1}))\ldots )}_{s},$
$f_k\equiv f \bmod p^{k+1}$ and $f_0=\varphi_0.$
\end{enumerate}
\end{theorem}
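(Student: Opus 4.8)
The plan is to obtain Theorem \ref{generg2} from Theorem \ref{generg} together with the conjugacy established in Remark \ref{permvibor}; the only difference between the two statements is that condition (2) is now imposed at the single value $\bar{x}_{k-1}=0$ instead of at every $\bar{x}_{k-1}$, so everything reduces to showing that, in the relevant regime, these two requirements coincide.

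The necessity direction is immediate. If $f$ is ergodic, then by Theorem \ref{generg} the map $\varphi_0$ is transitive and every permutation $F_{k,\bar{x}_{k-1}}$ is transitive; specializing to $\bar{x}_{k-1}=0$ yields conditions (1) and (2) above.

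For sufficiency I would argue by induction on $k$ that $f_k$ is transitive modulo $p^{k+1}$, after which ergodicity of $f$ follows from Proposition 4.35 (p.105, \cite{ANKH}) exactly as in the proof of Theorem \ref{generg}. The base case $k=0$ is condition (1), since $f_0=\varphi_0$. For the inductive step I assume $f_{k-1}$ is transitive modulo $p^k$. This is precisely the hypothesis under which Remark \ref{permvibor} applies: choosing for each $\bar{x}_{k-1}$ an integer $r$ with $f_{k-1}^{(r)}(0)=\bar{x}_{k-1}$ and setting $G=\varphi_{k,f_{k-1}^{(r-1)}(0)}\circ\cdots\circ\varphi_{k,0}$, the Remark gives $G\circ F_{k,0}\circ G^{-1}=F_{k,\bar{x}_{k-1}}$. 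Conjugate permutations have the same cycle type, so transitivity of $F_{k,0}$ (condition (2)) propagates to transitivity of $F_{k,\bar{x}_{k-1}}$ for every $\bar{x}_{k-1}$. Feeding this into the coordinate identity $f_k^{(p^k)}=(\epsilon,\ldots,\epsilon,F_{k,\bar{x}_{k-1}})$ from the proof of Theorem \ref{generg}, I conclude as there that $f_k^{(p^{k+1})}=(\epsilon,\ldots,\epsilon)$ with $p^{k+1}$ minimal, i.e. $f_k$ is transitive modulo $p^{k+1}$, closing the induction.

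The step needing care is the bootstrapping of the induction. Remark \ref{permvibor} becomes available only once $f_{k-1}$ is known to be transitive modulo $p^k$, so the inductive hypothesis must genuinely be this transitivity (equivalently, transitivity of all $F_{s,\bar{x}_{s-1}}$ for $s<k$, not merely at $0$). The argument therefore upgrades the single-point condition on $F_{k,0}$ to the all-points condition of Theorem \ref{generg} one level at a time, each upgrade licensed by the transitivity secured at the previous level; this is exactly why testing $\bar{x}_{k-1}=0$ suffices and no circularity arises.
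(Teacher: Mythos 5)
Your proposal is correct and takes essentially the same route as the paper: the paper's own proof is the one-line instruction to set $\bar{x}_{k-1}=0$ and invoke Remark \ref{permvibor}, and your argument is exactly that, with the level-by-level induction (which licenses the conjugacy of Remark \ref{permvibor} only after transitivity of $f_{k-1}$ modulo $p^k$ has been secured) made explicit. Nothing needs to be fixed.
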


To prove this Theorem it is enough to set $\bar{x}_{k-1}=0$ and to use the result of the Remark \ref{permvibor}.

\begin{comment} {\small
In fact, Theorem \ref{generg2} states that if one checks ergodicity of a function using the conditions of 
Theorem \ref{generg}, then one can use an arbitrary value from the set $\Z/p^{k-1}\Z$ as a starting point for constructing a sequence
$f_{k-1}^{(s)},$ $s\in \left\{0,1,\ldots,p^{k-1}-1\right\}.$ For example, $\bar{x}_{k-1}=0.$}
\end{comment}

\begin{corollary}
\label{crerg}  
Let the $p$-adic compatible function $f\:\Z_p\>\Z_p$ be presented in the coordinate form (\ref{coordf}),
where the subfuctions of the coordinate functions, $\varphi_0$ and $\varphi_{k,\bar{x}_{k-1}},
k=1,2,\ldots,$ are permutations on the set $\Z/p\Z$ and $\phi_0$ is transitive.
Then it is always possible to construct 
permutations $g_1,g_2,\ldots,g_k,\ldots$ such that by setting $\tilde{\varphi}_{k,0}=g_k, k=1,2,\ldots,$
the corresponding function $\tilde{f}$ which is defined with the aid of subfunctions 
$\varphi_{k,\bar{x}_{k-1}},$ $\bar{x}_{k-1}\neq 0,$ and  $\tilde{\varphi}_{k,0}$
is ergodic.
\end{corollary}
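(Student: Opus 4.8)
The plan is to read off what ergodicity of the modified map $\tilde f$ requires from Theorem \ref{generg2} and then to build the permutations $g_k$ one level at a time. Since every subfunction of $\tilde f$ (including each $g_k=\tilde\varphi_{k,0}$, which will be constructed as a permutation) lies in the symmetric group of $\Z/p\Z$, Theorem \ref{measurecoord} already guarantees that $\tilde f$ is measure preserving; and since $\tilde\varphi_0=\varphi_0$ is transitive by hypothesis, the first ergodicity condition of Theorem \ref{generg2} holds for free. Thus the entire task reduces to arranging the second condition, namely transitivity of
\[
\tilde F_{k,0}=\tilde\varphi_{k,\tilde f_{k-1}^{(p^k-1)}(0)}\circ\cdots\circ\tilde\varphi_{k,\tilde f_{k-1}^{(1)}(0)}\circ\tilde\varphi_{k,0}
\]
for every $k=1,2,\ldots$, and I would do this by induction on $k$.

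The key structural observation is the following. Suppose inductively that $g_1,\dots,g_{k-1}$ have already been chosen so that $\tilde f_{k-1}$ is transitive modulo $p^k$ (the base step $k=0$ being exactly the transitivity of $\varphi_0$). Then the orbit $0,\tilde f_{k-1}(0),\dots,\tilde f_{k-1}^{(p^k-1)}(0)$ is a single cycle running through each residue modulo $p^k$ exactly once, so among the $p^k$ indices $\tilde f_{k-1}^{(s)}(0)$, $s=0,\dots,p^k-1$, the value $0$ occurs only at $s=0$. Consequently, in the product defining $\tilde F_{k,0}$ precisely one factor carries the index $0$, namely the rightmost factor $\tilde\varphi_{k,0}=g_k$, while every other factor is one of the prescribed, unmodified permutations $\varphi_{k,\bar x_{k-1}}$ with $\bar x_{k-1}\neq0$. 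Hence I can factor
\[
\tilde F_{k,0}=H_k\circ g_k,\qquad H_k=\varphi_{k,\tilde f_{k-1}^{(p^k-1)}(0)}\circ\cdots\circ\varphi_{k,\tilde f_{k-1}^{(1)}(0)},
\]
where $H_k$ is a fixed permutation once $g_1,\dots,g_{k-1}$ are fixed.

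Given this factorization the construction of $g_k$ is immediate: I fix any transitive permutation $\sigma_k$ of $\Z/p\Z$ (for instance $i\mapsto i+1\bmod p$) and set $g_k=H_k^{-1}\circ\sigma_k$. Then $g_k$ is a permutation and $\tilde F_{k,0}=H_k\circ g_k=\sigma_k$ is transitive, so condition (2) holds at level $k$. Exactly as in the sufficiency argument inside the proof of Theorem \ref{generg}, transitivity of $\tilde f_{k-1}$ modulo $p^k$ together with transitivity of $\tilde F_{k,0}$ forces $\tilde f_k^{(p^{k+1})}=(\epsilon,\dots,\epsilon)$ with $p^{k+1}$ minimal, i.e.\ $\tilde f_k$ is transitive modulo $p^{k+1}$; this closes the induction. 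Since all the reductions $\tilde f_k$ are then transitive, Proposition 4.35 (p.105, \cite{ANKH}) yields ergodicity of $\tilde f$.

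The one point demanding care, which I regard as the main (though mild) obstacle, is the interdependence built into $H_k$: its factors and their order are governed by the orbit of $0$ under $\tilde f_{k-1}$, and hence by the previously selected $g_1,\dots,g_{k-1}$. This is precisely why the $g_k$ cannot be chosen simultaneously but must be produced inductively: only after $g_1,\dots,g_{k-1}$ are fixed is $\tilde f_{k-1}$, and therefore $H_k$, determined; and only the inductive hypothesis of transitivity modulo $p^k$ guarantees that the index $0$ appears exactly once in $\tilde F_{k,0}$, which is what makes the clean factorization $\tilde F_{k,0}=H_k\circ g_k$, and hence the solvability of $H_k\circ g_k=\sigma_k$ for $g_k$, possible at every step.
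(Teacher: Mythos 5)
Your proposal is correct and follows essentially the same route as the paper: an inductive construction in which, with $\tilde f_{k-1}$ already transitive modulo $p^k$, the product $\tilde F_{k,0}$ factors as a fixed permutation (your $H_k$, the paper's $G_k$, built from the untouched $\varphi_{k,\bar x_{k-1}}$ with $\bar x_{k-1}\neq 0$) composed with $g_k$, and one solves $g_k=H_k^{-1}\circ\sigma_k$ for a chosen transitive $\sigma_k$ so that Theorem \ref{generg2} applies. Your explicit remark that transitivity modulo $p^k$ ensures the index $0$ occurs exactly once (as the rightmost factor) makes precise a point the paper leaves implicit, but the argument is the same.
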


\begin{proof}
Let us find permutations $g_1,g_2,\ldots,g_k,\ldots$ by induction.
By conditions of Theorem, permutation $\varphi_0=f_0\equiv f \bmod p$ is transitive on $\Z/p\Z,$ and, in particular, $\left\{f_0^{(p-1)}(0),\ldots,f_0(0),0\right\}=\left\{0,1,\ldots,p-1\right\}.$
By using notations from Theorem \ref{generg2}, we set $G_1=\varphi_{1,f_0^{(p-1)}(0)} \circ \varphi_{1,f_0^{(p-2)}(0)} \circ\ldots\circ \varphi_{1,f_0(0)}.$
We also choose some transitive permutation $H_1$ on $\Z/p\Z.$

Let $g_1=G_1^{-1} \circ H_1.$ Then set $\tilde{\varphi}_{1,0}=g_1,$ and we obtain that permutation $F_{1,0}=\varphi_{1,f_0^{(p-1)}(0)} \circ \varphi_{1,f_0^{(p-2)}(0)} \circ\ldots\circ \varphi_{1,f_0(0)}\circ \tilde{\varphi}_{1,0}=H_1$ is transitive on $\Z/p\Z.$ Moreover, the function $\tilde{f}_1\equiv \tilde{f} \bmod p^2$ is transitive on $\Z/p^2\Z.$
Now let permutations $g_1,g_2,\ldots,g_{k-1},\ldots$ have already been constructed, and the function $\tilde{f}_{k-1}\equiv \tilde{f} \bmod p^k$ is transitive on $\Z/p^k\Z.$

Let us construct a suitable permutation $g_k.$
Let $G_k=\varphi_{k,\tilde{f}_{k-1}^{(p^k-1)}(0)} \circ\ldots\circ \varphi_{k,\tilde{f}_{k-1}(0)}$ and $H_k$ be some transitive permutation on $\Z/p\Z.$
And set $g_k=G_k^{-1} \circ H_k.$ Then, for $\tilde{\varphi}_{k,0}=g_k,$ we obtain that the permutation $F_{k,0}=\varphi_{k,\tilde{f}_{k-1}^{(p^k-1)}(0)}  \circ\ldots\circ \varphi_{k,\tilde{f}_{k-1}(0)}\circ \tilde{\varphi}_{k,0}=H_k$ is transitive on $\Z/p\Z.$ And the function $\tilde{f}_k\equiv \tilde{f} \bmod p^{k+1}$ is transitive on $\Z/p^{k+1}\Z.$

Thus we have shown existence of permutations $g_1,g_2,\ldots,g_k,\ldots$ such that for $\tilde{\varphi}_{k,0}=g_k$ $k=1,2,\ldots$ permutations $F_{k,0}$ are transitive on $\Z/p\Z.$
By conditions of Theorem, $\varphi_0$ is transitive on $\Z/p\Z,$ therefore, the function $f$ is ergodic -- by Theorem \ref{generg2}.
\end{proof}

\begin{corollary} 
\label{experm}
Let the $p$-adic compatible function $f\:\Z_p\>\Z_p$ be presented in the coordinate form (\ref{coordf}),
where the subfunctions (of the coordinate functions) $\varphi_0, \varphi_{k,\bar{x}_{k-1}},
k=1,2,\ldots$ are permutations on $\Z/p\Z.$  Suppose that the subfunctions have the form:  
\begin{equation}
\label{JJ7}
\varphi_{k,\bar{x}_{k-1}}=g_k^{n(\bar{x}_{k-1})},
\end{equation}
 where $g_k$ is a permutation on $\Z/p\Z$ and $n(\bar{x}_{k-1})$ is a positive integer ($g_k^0$- identity permutation), 
$k=1,2,\ldots .$ Then the function $f$ is ergodic if and only if
\begin{enumerate}
\item $\varphi_0,$ $g_k$ are transitive permutations;
\item $\sum_{\bar{x}_{k-1}=0}^{p^k-1} n(\bar{x}_{k-1})\neq 0 \bmod p,$ $k=1,2,\ldots .$
\end{enumerate}
\end{corollary}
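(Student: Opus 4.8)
The plan is to deduce the corollary from the general criterion of Theorem \ref{generg2}, exploiting the single structural feature that sets this case apart: for each fixed $k$ all the subfunctions $\varphi_{k,\bar{x}_{k-1}}$ are powers of one and the same permutation $g_k$. The payoff of hypothesis (\ref{JJ7}) is that every factor entering the product $F_{k,0}$ then lies in the cyclic group $\langle g_k\rangle$, so these factors commute. The order of composition --- which is the genuine obstruction in the general, non-commuting case --- therefore becomes irrelevant, and $F_{k,0}$ must collapse to a single power of $g_k$.

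First I would insert (\ref{JJ7}) into the definition of $F_{k,0}$ supplied by Theorem \ref{generg2} and use commutativity in $\langle g_k\rangle$ to write
\[
F_{k,0}=g_k^{\,n(f_{k-1}^{(p^k-1)}(0))}\circ\cdots\circ g_k^{\,n(f_{k-1}(0))}\circ g_k^{\,n(0)}
=g_k^{\sum_{s=0}^{p^k-1}n\left(f_{k-1}^{(s)}(0)\right)}.
\]
The next step is to reindex this exponent. Arguing inductively exactly as in the proof of Theorem \ref{generg} and in Remark \ref{permvibor}, I would assume that $f_{k-1}$ is already transitive on $\Z/p^k\Z$; then the orbit $\{f_{k-1}^{(s)}(0):s=0,\dots,p^k-1\}$ runs through every residue $\bar{x}_{k-1}$ modulo $p^k$ exactly once, so the exponent equals $S_k:=\sum_{\bar{x}_{k-1}=0}^{p^k-1}n(\bar{x}_{k-1})$ and hence $F_{k,0}=g_k^{S_k}$.

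It then remains to characterise when $g_k^{S_k}$ is transitive on $\Z/p\Z$. Since $p$ is prime, a permutation of $\{0,\dots,p-1\}$ is transitive precisely when it is a single $p$-cycle, and a power $g_k^{m}$ is a $p$-cycle if and only if $g_k$ itself is a $p$-cycle and $\gcd(m,p)=1$, i.e. $p\nmid m$. This elementary fact about powers of cycles forces both conditions at once: $g_k$ must be transitive (no power of a non-$p$-cycle can be a $p$-cycle), which is part of condition (1), and $S_k\not\equiv0\bmod p$, which is condition (2). Combining the equivalence ``$F_{k,0}$ transitive $\iff$ $g_k$ transitive and $S_k\not\equiv0\bmod p$'' with the unaltered first condition on $\varphi_0$ from Theorem \ref{generg2} yields the stated criterion.

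I would organise both implications as a single induction on $k$. For sufficiency, transitivity of $\varphi_0$ handles the base case; assuming $f_{k-1}$ transitive, the displayed computation shows $F_{k,0}=g_k^{S_k}$ is a $p$-cycle under (1)--(2), whence $f_k$ is transitive by the step already carried out in the proof of Theorem \ref{generg}, and Proposition 4.35 of \cite{ANKH} delivers ergodicity. For necessity, ergodicity forces each $f_{k-1}$ transitive by Theorem 4.23 of \cite{ANKH} and each $F_{k,0}$ transitive by Theorem \ref{generg2}, and reading the equivalence backwards recovers (1)--(2). The hard part will be bookkeeping rather than ideas: the reindexing that turns the orbit-sum into $S_k$ is legitimate only once transitivity of $f_{k-1}$ is in hand, so the commuting computation and the transitivity induction must be threaded together step by step rather than invoked separately.
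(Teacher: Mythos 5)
Your proposal is correct and follows essentially the same route as the paper: both reduce to Theorem \ref{generg2}, use commutativity inside the cyclic group $\langle g_k\rangle$ together with transitivity of $f_{k-1}$ to collapse $F_{k,0}$ to $g_k^{S_k}$, and then invoke the fact that in $S_p$ a power $g_k^{S_k}$ is a $p$-cycle iff $g_k$ is a $p$-cycle and $p\nmid S_k$ (the paper phrases this via orders and cycle-type counts $r_1+2r_2+\ldots+pr_p=p$, which is the same argument). Your closing observation about threading the reindexing with the transitivity induction is exactly how the paper's sufficiency direction is organized.
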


\begin{proof}
Let $f$ be an ergodic function. By Theorem \ref{generg2} permutations $\varphi_0$ and $F_{k,0}=\varphi_{k,f_{k-1}^{(p^k-1)}(0)}  \circ\ldots\circ \varphi_{k,f_{k-1}(0)}\circ \varphi_{k,0},$ $k=1,2,\ldots $ are  transitive on $\Z/p\Z.$
Then 
\begin{align*}
&F_{k,0}=\varphi_{k,f_{k-1}^{(p^k-1)}(0)}  \circ\ldots\circ \varphi_{k,f_{k-1}(0)}\circ \varphi_{k,0}= \\
&=g_k^{n(f_{k-1}^{p^k-1} (0))}\circ\ldots \circ g_k^{n(0)}= \\
&=g_k^{n(p^k-1)+\ldots+n(0)}=g_k^{S_k},
\end{align*}
where $S_k=n(p^k-1)+\ldots+n(0).$

As $F_{k,0}$ is transitive on $\Z/p\Z,$ then $ord\; F_{k,0}=p.$

Let $ord\; g_k=\alpha.$ Then $p= ord\; F_{k,0}=ord\; g_k^{S_k}=\frac{\alpha}{\gcd (\alpha,S_k)},$ i.e. $p$ divides $\alpha.$
If the permutation $g_k$ has $r_1$ cycles of the length $1,$ $r_2$ cycles of the  length $2, \ldots r_p$ cycles of the length $p,$ and $r_1+2r_2+\ldots+pr_p=p,$
then $\alpha=ord\; g_k=lcm (i|r_i\neq 0).$
As  $p$ is a prime number and $p$ devides $\alpha,$  then the permutation $g_k$ consists of one cycle of length $p,$ i.e. $\alpha=p$  and $g_k$ is transitive permutation.
From $p= \frac{\alpha}{\gcd (\alpha,S_k)},$ $\alpha=p$ follows that $\gcd (\alpha,S_k)=\gcd (p,S_k)=1,$ i.e.
$S_k=\sum_{\bar{x}_{k-1}=0}^{p^k-1} n(\bar{x}_{k-1})\neq 0 \bmod p.$ 

And vice versa, let us find an order of each permutation $F_{k,0},$ $k=1,2,\ldots .$
By one of the conditions of this corollary   $\varphi_0=f_0=f \bmod p$ is a transitive permutation and, in particular, $\left\{f_0^{(p-1)}(0),\ldots,f_0(0),0\right\}=\left\{0,1,\ldots,p-1\right\}.$
Then $F_{1,0}=\varphi_{1,f_0^{(p-1)}(0)} \circ\ldots\circ \varphi_{1,f_0(0)}\circ \varphi_{1,0}=g_1^{n(p-1)+\ldots+n(0)}=g_1^{S_1},$
where $S_1=n(p-1)+\ldots+n(0).$
By initial conditions $g_1$ is transitive on $\Z/p\Z,$ then $ord\; g_1=p.$ As $S_1\neq 0 \bmod p,$ then $\gcd (p,S_k)=1$ and 
$ord\; F_{1,0}=ord\; g_1^{S_1}=\frac{p}{\gcd (p,S_k)}=p.$ Thus $F_{1,0}$ is transitive permutation on $\Z/p\Z.$
Moreover, the function $f_1=f \bmod p^2$ is transitive on $\Z/p^2\Z.$

Computation of the values $ord\; F_{k,0},$ $k=2,3\ldots $ is performed in a similar way, taking into account that at each step we prove transitivity of the functions
$f_{k-1}\equiv f \bmod p^k.$
Finally, we obtain that the permutations $F_{k,0},$ $k=1,2,\ldots $ are  transitive on $\Z/p\Z.$ As $\varphi_0$ is transitive on $\Z/p\Z,$
then the function $f$ is ergodic by Theorem \ref{generg2}.
\end{proof}

\begin{comment}
\label{cicl}
In the first Comment  was stated that by Theorem \ref{generg}  
to check a function on ergodicity one should check transitivity of the permutation $F_{k,\bar{x}_{k-1}},$
$k=1,2,\ldots.$ And each such permutation is a product of permutations $\varphi_{k,\bar{x}_{k-1}}\in S_p,$ $\bar{x}_{k-1} \in \left\{0,\ldots,p^{k-1}-1\right\},$ where $S_p$ is the symmetric group of permutations of the order $p.$ The order of their appearance in the resultant product is defined by the sequence of residues modulo $p^k$
$$\tau_k=\left\{\bar{x}_{k-1}, f_{k-1}(\bar{x}_{k-1}),\ldots, f_{k-1}^{(p^{k-1}-2)}(\bar{x}_{k-1}),f_{k-1}^{(p^k-1)}(\bar{x}_{k-1})\right\}.$$

If we suppose that the permutations $\varphi_{k,\bar{x}_{k-1}}\in S_p, \bar{x}_{k-1}\in \{0,1,..., p^{k-1}-1\},$ 
commute, then the conditions of ergodicity of $f$ are simplified.
In the Corollary \ref{experm}  we considered the case such that,  for each $k = 1,2, \ldots,$ the permutations $\varphi_{k,\bar{x}_{k-1}}\in S_p$
belong to the cyclic group generated by a permutation $g_k.$ In this case, all $\varphi_{k,\bar{x}_{k-1}}\in S_p$ commute, 
$F_{k,\bar{x}_{k-1}}$ does not depend on the order of elements in the sequence $\tau_k,$ and, moreover, $F_{k,\bar{x}_{k-1}}=g_k^\alpha$ for some $\alpha$
depending only on $k.$ Therefore, to verify the transitivity of $F_{k,\bar{x}_{k-1}},$ 
there is no need to build the sequence $\tau_k.$
This simplifies essentially the verification of ergodicity of $f.$
\end{comment}

\section{Examples of classes of ergodic functions for $p\neq 2$}
\begin{theorem}
\label{poln}
Let the $p$-adic ($p\neq 2$) compatible function $f\:\Z_p\>\Z_p$ be presented in the coordinate form (\ref{coordf}),
where the coordinate functions have the form:
\begin{equation}
\label{FORM} 
\varphi_{k, \bar{x}_{k-1}}(x_k)=x_k+\alpha(\bar{x}_k),
 \end{equation}
$ k=1,2,\ldots.$
The function $f$ is ergodic if and only if
\begin{enumerate}
\item $\varphi_0$ is a transitive  (monocycle) permutation on the set  of residues modulo $p;$
\item $2^{p-2}+\frac{1}{p^k}   \sum_{i=0}^{p^k-1} f(i)\neq 0 \bmod p,$ $k=2,3,\ldots .$
\end{enumerate}
\end{theorem}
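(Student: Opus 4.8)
The plan is to recognize the form (\ref{FORM}) as the special case of Corollary \ref{experm} in which each $g_k$ is the single cyclic shift, and then to translate the resulting arithmetic condition into the partial Volkenborn sum. Since an ergodic $f$ is in particular measure preserving, Theorem \ref{measurecoord} guarantees that every subfunction $\varphi_{k,\bar{x}_{k-1}}$ is a permutation of $\{0,\ldots,p-1\}$. For $\varphi_{k,\bar{x}_{k-1}}$ to be a translation, as the additive form (\ref{FORM}) dictates, the increment must be constant in $x_k$, i.e. a function of the frozen variables $\bar{x}_{k-1}$; that translation is exactly the power $g^{\alpha}$ of the monocyclic permutation $g\colon x\mapsto x+1 \bmod p$. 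Thus the hypotheses of Corollary \ref{experm} hold with $g_k=g$ for every $k$ and exponent $n(\bar{x}_{k-1})=\alpha$. As $g$ is transitive, the first clause of Corollary \ref{experm} reduces to transitivity of $\varphi_0$, and the second becomes $\sum_{\bar{x}_{k-1}=0}^{p^k-1}\alpha \not\equiv 0 \bmod p$. It then remains only to identify this sum with $2^{p-2}+\frac{1}{p^k}\sum_{i=0}^{p^k-1}f(i)$ modulo $p$.

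The heart of the argument is the evaluation of the partial integral sum modulo $p$. Writing $f(i)=\sum_{j\ge 0}p^j\varphi_j(x_0,\ldots,x_j)$ with $i=x_0+px_1+\cdots+p^{k-1}x_{k-1}$ (so $x_j=0$ for $j\ge k$) and summing over the complete residue system $i=0,\ldots,p^k-1$, I would split the digit index into three ranges. For $j<k$ the variable $x_j$ runs over a full residue system while the other digits vary freely; because $\varphi_{j,\bar{x}_{j-1}}$ is a permutation, $\sum_{x_j}\varphi_{j,\bar{x}_{j-1}}(x_j)=0+1+\cdots+(p-1)=\frac{p(p-1)}{2}$, independently of $\alpha$. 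Collecting these contributions gives $\frac{p^k(p^k-1)}{2}$, so after dividing by $p^k$ they leave $\frac{p^k-1}{2}\equiv -2^{p-2}\bmod p$ (here the Fermat identity $2^{p-1}\equiv 1$ and the oddness of $p$ are used). For $j=k$ the digit $x_k$ is frozen to $0$, so the $k$th coordinate equals $\alpha\bmod p$ and summation produces precisely $\sum_{\bar{x}_{k-1}}\alpha$. For $j>k$ the contribution carries a factor $p^{j-k}$ and vanishes mod $p$. Hence
\[
\frac{1}{p^k}\sum_{i=0}^{p^k-1}f(i)\equiv -2^{p-2}+\sum_{\bar{x}_{k-1}=0}^{p^k-1}\alpha \pmod p,
\]
which is exactly the claimed reformulation of the Corollary \ref{experm} condition.

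The step I expect to be the main obstacle is this digit-by-digit evaluation, and in particular the easily overlooked point that each coordinate value must be reduced modulo $p$ before summation: it is this reduction that lets the permutation property collapse every lower digit-sum to the single constant $\frac{p(p-1)}{2}$ and thereby generate the universal term $2^{p-2}$. Without the modular reduction one would spuriously retain the lower-level sums and the criterion would become cumulative rather than level-by-level. A secondary bookkeeping matter is the index range: the computation is valid for all $k\ge 1$, the case $k=1$ recovering the elementary translation-sum condition $\sum_{x_0}\alpha\not\equiv 0$, which together with transitivity of $\varphi_0$ yields transitivity of $f_1$ modulo $p^2$. Accordingly the argument is organized inductively, establishing transitivity of $f_k$ modulo $p^{k+1}$ one level at a time exactly as in the proof of Theorem \ref{generg2}, and the equivalence above shows that the inductive hypothesis at level $k$ is precisely condition (2).
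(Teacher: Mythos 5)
Your proposal is correct, but it factors the argument differently from the paper. The paper proves Theorem \ref{poln} directly from Theorem \ref{generg2}: it first checks measure preservation, computes $F_{k,0}=x_k+\sum_{i=0}^{p^k-1}\alpha_k\bigl(f_{k-1}^{(i)}(0)\bigr)$, and runs the induction on $k$ itself, using transitivity of $f_{k-1}$ modulo $p^k$ to replace the sum over iterates by $\sum_{\bar{x}=0}^{p^k-1}\alpha_k(\bar{x})$; the identification with the integral sum is then obtained from $\sum_{\bar{x}=0}^{p^k-1}f_{k-1}(\bar{x})=\frac{p^k(p^k-1)}{2}$, which invokes bijectivity of $f_{k-1}$ modulo $p^k$ as a whole. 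You instead observe that the translations $x_k\mapsto x_k+\alpha(\bar{x}_{k-1})$ are exactly the powers $g^{\alpha(\bar{x}_{k-1})}$ of the unit shift $g$, and you invoke Corollary \ref{experm} with $g_k=g$ and $n(\bar{x}_{k-1})=\alpha(\bar{x}_{k-1})$, so the level-by-level induction is inherited from that corollary rather than repeated; and your evaluation of the lower digits proceeds coordinate-by-coordinate (each $\sum_{x_j}\varphi_{j,\bar{x}_{j-1}}(x_j)=\frac{p(p-1)}{2}$ by the permutation property) rather than through bijectivity of $f_{k-1}$. The two computations of the integral sum are equivalent, so the real difference is structural: your route reuses an already-proved result (a specialization the paper itself gestures at in its comments on cyclic subgroups of $S_p$), which is cleaner; the paper's self-contained induction buys independence from Corollary \ref{experm} but duplicates its mechanism.

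One discrepancy deserves attention, and it is in your favor. Corollary \ref{experm} requires $\sum n(\bar{x}_{k-1})\not\equiv 0 \bmod p$ for all $k=1,2,\ldots$, and accordingly your criterion includes the level-one condition $\sum_{x_0}\alpha(x_0)\not\equiv 0\bmod p$, equivalently $2^{p-2}+\frac{1}{p}\sum_{i=0}^{p-1}f(i)\not\equiv 0 \bmod p$, whereas the statement of Theorem \ref{poln} asserts condition (2) only for $k=2,3,\ldots$. The $k=1$ instance is not implied by the printed conditions: take $\alpha_1\equiv 0$ and, for $k\geq 2$, $\alpha_k$ equal to the indicator of $\bar{x}=0$; then $\varphi_0$ can be transitive and condition (2) holds for all $k\geq 2$, yet $F_{1,0}$ is the identity, $f_1$ is not transitive modulo $p^2$, and $f$ is not ergodic. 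Indeed, the paper's own induction silently appeals to ``the second condition'' at the step $k=1$, where it is not available. So the range $k=2,3,\ldots$ in the theorem appears to be a misprint for $k=1,2,\ldots$, and your version, which treats $k=1$ explicitly, is the statement that is actually true and that both arguments in fact establish.
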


\begin{proof}
Note that the function $f$ preserves measure.  Indeed, the coordinate function $\varphi_0$ is bijective by the assumption, and  the coordinate functions 
$\varphi_{k, \bar{x}_{k-1}}(x_k)$ are 
bijective as linear polynomials over the field of residues modulo $p.$ Then by the criterion of measure-preserving functions in terms of coordinate functions, see Theorem 2.1 in \cite{MeraJNT}, the function $f$ is measure-preserving.

For fixed $\bar{x}\in \left\{0,1,\ldots,p^k-1\right\}$ the function $x_k \to \varphi_{k, \bar{x}_{k-1}}(x_k)$ is a permutation on $\Z/p\Z, k=1,2,\ldots .$
As in Theorem \ref{generg2}, let us consider the permutations 
$$
F_{k,0}=\varphi_{k, f_{k-1}^{(p^k-1)}(0)} \circ \varphi_{k, f_{k-1}^{(p^k-2)}(0)} 
\circ\ldots\circ \varphi_{k,0}(x_k),
$$
where $k=1,2,\ldots .$
Here $f_{k-1}^{(s)}(0)= f_{k-1}(f_{k-1}(\ldots f_{k-1}(0))\ldots),$
the iteration of the $s$th order, and  $f_{k-1}\: x \bmod p^k \longmapsto f(x) \bmod p^k.$ (To be 
completely formal, instead of  $0,$  we have to use  the symbol $\bar{0}=\left(0,0,\ldots,0\right).$ However, to make notation simpler we shall 
proceed with  $0).$
By the assumption the permutations $\varphi_{k, \bar{x}_{k-1}}$ are given by linear polynomials of the 
form $\varphi_{k, \bar{x}_{k-1}}(x_k)= x_k+\alpha_k(\bar{x}).$ Therefore
\begin{align*}
&F_{k,0}=x_k+ \alpha_k(f_{k-1}^{(p^k)}(0)) + \alpha_k (f_{k-1}^{(p^k-2)}(0)) +\ldots+ \alpha_k(0)= \\
&=x_k+ \sum_{i=0}^{p^k-1} \alpha_k(f_{k-1}^{(i)}(0)).
\end{align*}

Let us show  by induction with respect to  $k=0,1,2,\ldots .$ 
that the functions $f_k\: \Z/p^k\longmapsto \Z/p^k,$ $f_k(x)\equiv f(x) \bmod p^{k+1}$ are transitive on $\Z/p^k.$
For $k=0,$ the function $f_0$ is transitive because $f_0=\varphi_0,$ where $\varphi_0$ is transitive on $\Z/p^k$ by the 
assumption.  Suppose now that $f_{k-1}$ is transitive on $\Z/p^k.$ Then
$$F_{k,0}=x_k+ \sum_{i=0}^{p^k-1} \alpha_k(f_{k-1}^{(i)}(0))=x_k+ \sum_{i=0}^{p^k-1} \alpha_k (i).$$
On the other hand, set $x=x_0+px_1+\ldots+p^kx_k=\bar{x}+p^kx_k,$ then we obtain
\begin{align*}
\sum_{x=0}^{p^k-1} f(x) \equiv \sum_{\bar{x}=0}^{p^k-1} \sum_{i=0}^k p^i \varphi_i (x_0,x_1,\ldots, x_i) \equiv \\
\equiv \sum_{\bar{x}=0}^{p^k-1} \left(p^k \alpha_k(\bar{x}) + \sum_{i=0}^{k-1} p^i \varphi_i (x_0,x_1,\ldots, x_i)  \right)\equiv \\
\equiv p^k\cdot \sum_{\bar{x}=0}^{p^k-1} \alpha_k(\bar{x}) + \sum_{\bar{x}=0}^{p^k-1} f_{k-1} (\bar{x}) \bmod p^{k+1}.
\end{align*}

As $f_{k-1}$ is bijective on $\Z/p^{k-1},$ then $\sum_{\bar{x}=0}^{p^k-1} f_{k-1} (\bar{x})=\frac{p^k(p^k-1)}{2},$ i.e.
$$\sum_{x=0}^{p^k-1} f(x)\equiv p^k\cdot \sum_{\bar{x}=0}^{p^k-1} \alpha_k(\bar{x})+ \frac{p^k(p^k-1)}{2} (\bmod p^{k+1}),$$ or
\begin{align*}
&\frac{1}{p^k} \cdot \sum_{x=0}^{p^k-1} f(x)\equiv \sum_{\bar{x}=0}^{p^k-1} \alpha_k(\bar{x})+ \frac{p^k-1}{2}\equiv \\
&\equiv \sum_{\bar{x}=0}^{p^k-1} \alpha_k(\bar{x})-2^{-1} (\bmod p).
\end{align*}
Note that $2^{-1}=2^{p-2} \bmod p.$ Then by the second condition of this Theorem 
$$\sum_{\bar{x}=0}^{p^k-1} \alpha_k(\bar{x})=\frac{1}{p^k} \cdot \sum_{x=0}^{p^k-1} f(x)+2^{p-2} \neq 0(\bmod p).$$
And, therefore, the permutation $F_{k,0}$ is transitive on $\Z/p\Z.$ This means that the function $f_k$ is transitive on $\Z/p^k.$
By Theorem 4.23 [p.99, \cite{ANKH}] from transitivity of $f_k$ on $\Z/p^k,$ $k=0,1,2,\ldots,$ follows that the function $f$ is ergodic.

Vice versa, let the function $f$ be ergodic function. Then $f \bmod p\equiv \varphi_0$ and functions $F_{k,0},$ $k=1,2,\ldots$ are transitive on $\Z/p\Z$ by Theorem \ref{generg2}.
As $F_{k,0}=x_k+ \sum_{i=0}^{p^k-1} \alpha_k(f_{k-1}^{(i)}(0))=x_k+ \sum_{i=0}^{p^k-1} \alpha_k (i),$ then transitivity of $F_{k,0}$ means that 
$\sum_{i=0}^{p^k-1} \alpha_k (i)\neq 0 \bmod p.$
Then $$\frac{1}{p^k} \cdot \sum_{x=0}^{p^k-1} f(x)\equiv \sum_{\bar{x}=0}^{p^k-1} \alpha_k(\bar{x})+ \frac{p^k-1}{2}\equiv \sum_{\bar{x}=0}^{p^k-1} \alpha_k(\bar{x})-2^{-1} (\bmod p),$$
i.e. $2^{p-2}+\frac{1}{p^k} \cdot \sum_{i=0}^{p^k-1} f(i) \neq 0 \bmod p,$ $k=2,3,\ldots.$
\end{proof}

\begin{comment}
In the Comment \ref{cicl} was stated that if for each $k=1,2,\ldots$ all permutations $\varphi_{k,\bar{x}_{k-1}},$ $\bar{x}_{k-1} \in \left\{0,\ldots,p^{k-1}-1\right\}$
belong to  a cyclic subgroup of the symmetric group $S_p,$
then the conditions of ergodicity of $f$ are simplified.
In Theorem \ref{poln} we considered the case such that the cyclic group is generated by permutation on $\Z/p\Z,$ which is given by a linear polynomial
$x+c,$ $c\neq 0.$In this case,  the ergodicity of $f$ is determined by the value of $\frac{1}{p^k} \cdot \sum_{i=0}^{p^k-1} f(i) \bmod p,$ $k=2,3,\ldots.$

Note that such sums appear in the definition of Volkenborn's integral, see, for example, Definition 55.1 \cite{Schikhof}.
However, real coupling between theory of Volkenborn integration and ergodicity of $p$-adic dynamical systems has not yet been clarified.
\end{comment}

\begin{remark}
The condition $p\not=2$ is important. Theorem \ref{poln} cannot be directly generalized to the case $p=2.$ At the same time
the authors expect that even in the case $p=2$ some kind of condition in terms of sums of values of the function $f$ can be 
found. For a moment, this is an open problem.
 
\end{remark}

The functions satisfying conditions of Theorem \ref{poln}  can be presented without using their coordinate representation.

Let us present the explicit form of $p$-adic functions, whose coordinate functions have the following form:
$\varphi_k(x_0,x_1,\ldots,x_k)=x_k+\alpha_k(x_0,x_1,\ldots,x_{k-1})=x_k+\alpha_k(\bar{x}),$ $k=1,2,\ldots .$
In other words, all permutations $\varphi_{k,\bar{x}_{k-1}} \in S_p,$ $\bar{x}_{k-1}\in \left\{0,\ldots,p^{k-1}-1\right\},$ $k=2,3,\ldots $
 are given by linear polynomials $x+c$ over $\Z/p\Z.$

\begin{lemma}
\label{bezcoord}
Let $p$-adic ($p\neq 2$) compatible function $f\:\Z_p\>\Z_p$ be presented in the coordinate form (\ref{coordf}). 
Then $\varphi_k(x_0,x_1,\ldots, x_k)=x_k+\alpha_k(x_0,\ldots, x_{k-1})=x_k+\alpha_k(\bar{x}),$
$k=1,2,\ldots,$  if and only if $$f(x)=f(x_0+px_1+\ldots +p^kx_k+ \ldots)=\varphi_0(x_0)+(x-x_0)+p\cdot g(x),$$
where $g\:\Z_p\> \Z_p$ is an arbitrary compatible function.
\end{lemma}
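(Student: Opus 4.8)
The plan is to prove both implications at once by writing down the candidate function explicitly. I set
$$
g(x):=\frac{1}{p}\bigl(f(x)-\varphi_0(x_0)-(x-x_0)\bigr).
$$
First I would check that this is a well-defined map $\Z_p\>\Z_p$: since $\varphi_0(x_0)$ is by construction the zeroth digit of $f(x)$, we have $f(x)-\varphi_0(x_0)\in p\Z_p$, and trivially $x-x_0\in p\Z_p$, so the quantity in parentheses is divisible by $p$. With $g$ defined this way the identity $f(x)=\varphi_0(x_0)+(x-x_0)+p\,g(x)$ holds automatically, so the whole content of the lemma collapses to a single equivalence: the function $g$ is compatible if and only if $\varphi_k(x_0,\ldots,x_k)=x_k+\alpha_k(\bar{x})$ for every $k\geq 1$, where $\bar{x}=(x_0,\ldots,x_{k-1})$. (For the reverse direction, if $f=\varphi_0(x_0)+(x-x_0)+p\,\tilde g$ for some compatible $\tilde g$, then necessarily $\tilde g=g$, so it suffices to analyse this one $g$.)

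To establish the equivalence I would invoke the coordinate criterion for compatibility (Proposition 3.35 of \cite{ANKH}): $g$ is compatible exactly when $g(x)\bmod p^m$ depends only on $x_0,\ldots,x_{m-1}$ for every $m$. Writing $p\,g(x)\equiv f(x)-\varphi_0(x_0)-(x-x_0)\pmod{p^{m+1}}$ and using $f(x)\equiv\sum_{j=0}^{m}p^j\varphi_j(x_0,\ldots,x_j)\pmod{p^{m+1}}$, one reads off that $g(x)\bmod p^m$ a priori depends only on $x_0,\ldots,x_m$; the only question is whether the dependence on the top digit $x_m$ survives. Fixing $x_0,\ldots,x_{m-1}$ and comparing two values $x_m,x_m'$ (higher digits being irrelevant modulo $p^m$, and writing $g|_{x_m}$ for the corresponding value of $g$), all terms except the $j=m$ term of $f$ and the $p^m x_m$ term of $x-x_0$ cancel, yielding
$$
p\bigl(g|_{x_m'}-g|_{x_m}\bigr)\equiv p^m\bigl[(\varphi_m(\bar{x},x_m')-x_m')-(\varphi_m(\bar{x},x_m)-x_m)\bigr]\pmod{p^{m+1}}.
$$

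Dividing by $p$, the $x_m$-dependence of $g(x)\bmod p^m$ vanishes for all choices of $x_m,x_m'$ precisely when $\varphi_m(\bar{x},x_m)-x_m$ is constant modulo $p$ as $x_m$ varies; setting $\alpha_m(\bar{x})$ equal to this common residue gives $\varphi_m(\bar{x},x_m)=x_m+\alpha_m(\bar{x})$ in $\Z/p\Z$ with $\alpha_m$ independent of $x_m$, which is exactly the condition in the lemma. Running this for every $m\geq 1$ closes both directions simultaneously. I expect the one delicate point to be the single division by $p$: because $\varphi_m-x_m$ need not lie in $\{0,\ldots,p-1\}$, a naive digit-by-digit description of $g$ is obscured by carries, and the clean way around this is to argue entirely modulo $p^{m+1}$ (as above) rather than trying to read off the coordinate functions of $g$ directly.
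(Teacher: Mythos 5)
Your proof is correct, and it takes a genuinely different route from the paper's. The paper works entirely through the van der Put expansion: it computes the coefficients $b_m$ of $f$, splits each as $b_m = (b_m \bmod p) + p\check{b}_m$, observes the key identity $\sum_{m\geq p} (b_m \bmod p)\, p^{\lfloor \log_p m\rfloor}\chi(m,x) = x - x_0$ under the hypothesis on the $\varphi_k$, and then assembles $g$ as the van der Put series with coefficients $\check{b}_m$ (compatibility of $g$ coming from the general criterion for 1-Lipschitz functions in terms of van der Put coefficients); the converse is a similar coefficient computation. You instead bypass the van der Put machinery entirely: you define $g = \frac{1}{p}\bigl(f(x)-\varphi_0(x_0)-(x-x_0)\bigr)$ explicitly, note that this $g$ is the \emph{only} candidate (which cleanly collapses the two implications into the single equivalence ``$g$ compatible $\iff$ $\varphi_m(\bar{x},x_m)-x_m$ is constant in $x_m$ for every $m$''), and verify that equivalence by a direct congruence computation modulo $p^{m+1}$ using only the definition of compatibility. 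Your argument is more elementary and self-contained, and it makes the uniqueness of $g$ explicit, which the paper leaves implicit; it also makes visible that the hypothesis $p\neq 2$ is never used, so the lemma holds for $p=2$ as well. What the paper's approach buys is consistency with the rest of its toolkit: the coefficients $b_m$ and their residues modulo $p$ are exactly the objects appearing in the measure-preservation criterion (Theorem 2.1 of \cite{MeraJNT}) and in the proof of Theorem \ref{measurecoord}, so the van der Put computation there is reusable, whereas your argument stands alone.
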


\begin{proof}
Let $f(x)=\sum_{m=0}^{\infty} B_m \chi(m,x)=\sum_{m=0}^{\infty} p^{\left\lfloor \log_p m \right\rfloor} b_m \chi(m,x)$ be the van der Put representation of the function $f.$
Let us find values of the coefficients $b_m$ for $m\geq p.$
Denote $m=x_0+px_1+\ldots +p^kx_k=\bar{x}+p^kx_k,$ $\bar{x}\in \left\{0,\ldots,p^k-1\right\},$ $x_k\in \left\{1,\ldots,p-1\right\}.$
Then 
\begin{align*}
&B_m=B_{\bar{x}+p^kx_k}=f(\bar{x}+p^kx_k)-f(\bar{x})= \\
&= p^k(\varphi_k(\bar{x}+p^kx_k)-\varphi_k(\bar{x}))+p^{k+1}(\ldots),
\end{align*}
i.e. 
\begin{align*}
&b_m=b_{\bar{x}+p^kx_k}\equiv \varphi_k(\bar{x}+p^kx_k)-\varphi_k(\bar{x})\equiv \\
&\equiv x_k+ \alpha_k(\bar{x})- \alpha_k(\bar{x})\equiv x_k \bmod p.
\end{align*}
In other words, $b_{\bar{x}+p^kx_k}= x_k+p\check{b}_{\bar{x}+p^kx_k}=x_k+p\check{b}_m $
for suitable $p$-adic integer $\check{b}_m=\check{b}_{\bar{x}+p^kx_k}.$

We set $f(i)=B_i=b_i\equiv b_i \bmod p+p\check{b}_i,$ $\check{b}_i\in \Z_p,$ $i=\left\{0,\ldots,p-1\right\}.$
Let $\varphi_0\: \Z/p\Z\> \Z/p\Z,$ where $\varphi_0\equiv f(x) \bmod p,$ i.e. $\varphi_0$ is a coordinate function from the representation of $f$ in the coordinate form.
Note that $\varphi_0(x)=b_0 (\bmod p) \chi(0,x)+\ldots +b_{p-1} (\bmod p) \chi(p-1,x)$ and
\begin{align*}
&\sum_{m=p}^\infty b_m (\bmod p) \cdot p^{\left\lfloor \log_p m \right\rfloor} \cdot \chi(m,x)= \\
&=\sum_{m=p}^\infty q(m) \chi(m,x)=x-x_0,
\end{align*}
where $$q (m)=q(m_0+pm_1+\ldots +p^{\left\lfloor \log_p m \right\rfloor}m_{\left\lfloor \log_p m \right\rfloor}=p^{\left\lfloor \log_p m \right\rfloor}m_{\left\lfloor \log_p m \right\rfloor},$$ $m_{\left\lfloor \log_p m \right\rfloor}\neq 0$ and $x=x_0+px_1+\ldots +p^kx_k+\ldots.$

And let us consider a compatible function $g(x)=\sum_{m=0}^{\infty} \check{b}_m p^{\left\lfloor \log_p m \right\rfloor}  \chi(m,x).$
Thus,
\begin{align*}
&f(x)=\sum_{m=0}^{\infty} b_m p^{\left\lfloor \log_p m \right\rfloor}  \chi(m,x)=\sum_{m=0}^\infty b_m (\bmod p) \cdot p^{\left\lfloor \log_p m \right\rfloor} \cdot \chi(m,x)+ \\
&+p\sum_{m=0}^{\infty} \check{b}_m p^{\left\lfloor \log_p m \right\rfloor}  \chi(m,x)= \sum_{m=0}^{p-1} b_m (\bmod p) p^{\left\lfloor \log_p m \right\rfloor}  \chi(m,x) +\\
&+\sum_{m=p}^\infty b_m (\bmod p) \cdot p^{\left\lfloor \log_p m \right\rfloor} \cdot \chi(m,x) +pg(x)= \\
&=\varphi_0+(x-x_0)+pg(x).
\end{align*}

Vice versa, let $f(x)=f(x_0+px_1+\ldots +p^kx_k+\ldots)=\varphi_0+(x-x_0)+pg(x),$  
$f(x_0+px_1+\ldots +p^kx_k+\ldots)=\varphi_0(x_0)+p\cdot \varphi_1(x_0,x_1)+\ldots+p^k\cdot \varphi_k(x_0,x_1,\ldots,x_k)+\ldots,$
and $f(x)=\sum_{m=0}^{\infty} b_m p^{\left\lfloor \log_p m \right\rfloor}  \chi(m,x).$

Set $m=x_0+px_1+\ldots +p^kx_k=\bar{x}+p^kx_k,$ we obtain that for $m\geq p$
\begin{align*}
&b_m=b_{\bar{x}+p^kx_k}\equiv \frac{1}{p^k} \left(f(\bar{x}+p^kx_k)-f(\bar{x})\right)\equiv \\
&\equiv \frac{1}{p^k}(\varphi_0(x_0)+(\bar{x}+p^kx_k-x_0)+\\
&+p\cdot g(\bar{x}+p^kx_k)-(\varphi_0(x_0)+(\bar{x}-x_0)+p\cdot g(\bar{x})))\equiv \\
&\equiv \frac{1}{p^k} \left(p^kx_k+p\cdot(g(\bar{x}+p^kx_k)-g(\bar{x}))\right)\equiv x_k+ \frac{1}{p^{k-1}} (g(\bar{x}+p^kx_k)-g(\bar{x})) \bmod p.
\end{align*}

As the function $g$ is compatible, then $g(\bar{x}+p^kx_k)\equiv g(\bar{x}) \bmod p^k$ and
$\frac{1}{p^{k-1}} (g(\bar{x}+p^kx_k)-g(\bar{x}))\equiv 0 \bmod p.$
Then $b_m\equiv b_{\bar{x}+p^kx_k}\equiv x_k \bmod p.$
On the other hand,
$b_m\equiv b_{\bar{x}+p^kx_k}\equiv \varphi_k(\bar{x}+p^kx_k)-\varphi_k(\bar{x}) \bmod p,$
and, therefore, for any fixed $\bar{x}=x_0+\ldots +p^{k-1}x_{k-1}$ we have
$$\varphi_k(\bar{x}+p^kx_k)-\varphi_k(\bar{x}) \equiv \varphi_k(x_0,\ldots ,x_{k-1},x_k)-\varphi_k(x_0,\ldots ,x_{k-1},0) \equiv x_k \bmod p.$$
Then $\varphi_k(x_0,\ldots ,x_{k-1},x_k)\equiv x_k+\varphi_k(x_0,\ldots ,x_{k-1},0) \bmod p.$
\end{proof}

\begin{theorem}
\label{prop3}
Let $p$-adic ($p\neq 2$) compatible function $f\:\Z_p\>\Z_p$ have the form
$f(x)=f(x_0+px_1+\ldots +p^kx_k+\ldots)=\varphi_0(x_0)+(x-x_0)+pg(x),$
where $g\:\Z_p\> \Z_p$  is a compatible function and $\varphi_0\: \Z/p\Z\longmapsto \Z/p\Z.$

The function $f$ is ergodic if and only if
\begin{enumerate}
\item $\varphi_0$ is transitive  (monocycle) permutation on the set  of residues modulo $p;$
\item $\frac{1}{p^{k-1}} \sum_{i=0}^{p^k-1} g(i) \neq 2^{p-2} \bmod p,$ $k=2,3,\ldots .$
\end{enumerate}
\end{theorem}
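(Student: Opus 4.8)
The plan is to reduce the statement to Theorem~\ref{poln}, which already characterizes ergodicity for compatible functions whose higher coordinate functions are affine of the form $\varphi_{k,\bar x_{k-1}}(x_k)=x_k+\alpha_k(\bar x)$. First I would invoke Lemma~\ref{bezcoord}: it asserts exactly that a compatible $f$ admits the closed form $f(x)=\varphi_0(x_0)+(x-x_0)+p\,g(x)$ with $g$ compatible \emph{if and only if} its coordinate functions satisfy $\varphi_k(x_0,\ldots,x_k)=x_k+\alpha_k(x_0,\ldots,x_{k-1})$ for all $k\geq 1$. Hence the functions in the hypothesis here are precisely those covered by Theorem~\ref{poln}, and in both descriptions $\varphi_0$ is the zeroth coordinate function. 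Condition~(1) of the two theorems is therefore literally the same, and the whole problem collapses to rewriting condition~(2) of Theorem~\ref{poln}, namely $2^{p-2}+p^{-k}\sum_{i=0}^{p^k-1}f(i)\neq 0\pmod p$, as a condition on $g$.

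The core is a summation identity relating $\sum_{i=0}^{p^k-1}f(i)$ and $\sum_{i=0}^{p^k-1}g(i)$. Substituting the closed form and splitting the sum into three pieces, I would exploit the key cancellation: since $\varphi_0$ is a bijection of the residues $\{0,\ldots,p-1\}$, one has $\sum_{i=0}^{p^k-1}\varphi_0(i_0)=\sum_{i=0}^{p^k-1} i_0$, so the contribution of $\varphi_0(i_0)$ exactly cancels the $-i_0$ from $(i-i_0)$. What survives is
\[
\sum_{i=0}^{p^k-1}f(i)=\sum_{i=0}^{p^k-1} i+p\sum_{i=0}^{p^k-1}g(i)=\frac{p^k(p^k-1)}{2}+p\sum_{i=0}^{p^k-1}g(i).
\]
Dividing by $p^k$ and reducing modulo $p$, using $\tfrac{p^k-1}{2}\equiv -2^{-1}\equiv -2^{p-2}\pmod p$ (Fermat's little theorem, valid since $p\neq 2$), I obtain
\[
\frac{1}{p^k}\sum_{i=0}^{p^k-1}f(i)\equiv \frac{1}{p^{k-1}}\sum_{i=0}^{p^k-1}g(i)-2^{p-2}\pmod p.
\]
Feeding this into the criterion of Theorem~\ref{poln} then converts the inequality of its condition~(2) into an inequality among the averaged values $p^{-(k-1)}\sum_{i<p^k}g(i)$, which is condition~(2) of the present theorem.

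Two points require care, and the second is where I expect the real work. First, I must check that $p^{-(k-1)}\sum_{i<p^k}g(i)$ is a well-defined residue modulo $p$, i.e. that $\sum_{i=0}^{p^k-1}g(i)\equiv 0\pmod{p^{k-1}}$; this follows from compatibility of $g$ by a counting argument on its coordinate functions $\gamma_j$, since $\gamma_j(x_0,\ldots,x_j)$ is constant on residue classes modulo $p^{j+1}$ and hence each digit level $j$ contributes a factor $p^{k-1}$ to the sum modulo $p^k$. (The same fact makes $p^{-k}\sum f(i)$ well defined, consistently with Theorem~\ref{poln}.) Second, and this is the main obstacle, I must carry the additive constant $\tfrac{p^k-1}{2}\equiv-2^{p-2}$ through both theorems with absolute consistency: a single misplaced factor of $2^{p-2}$ alters the final inequality, so the entire content of the equivalence lives in pinning down this constant. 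Once the bookkeeping of $\tfrac{p^k-1}{2}$ is fixed, matching the two forms of condition~(2) is immediate.
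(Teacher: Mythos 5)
Your route is exactly the paper's: Lemma~\ref{bezcoord} to identify the class, Theorem~\ref{poln} as the criterion, and a summation identity converting $\sum f$ into $\sum g$. Your identity is, moreover, correct: since $\varphi_0$ is a permutation of $\{0,\ldots,p-1\}$,
\[
\sum_{i=0}^{p^k-1}f(i)=\frac{p^k(p^k-1)}{2}+p\sum_{i=0}^{p^k-1}g(i),
\qquad\text{so}\qquad
\frac{1}{p^k}\sum_{i=0}^{p^k-1}f(i)\equiv\frac{1}{p^{k-1}}\sum_{i=0}^{p^k-1}g(i)-2^{p-2}\pmod p .
\]
The gap is your final sentence. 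Substituting this identity into condition~(2) of Theorem~\ref{poln}, the $2^{p-2}$ there cancels against your $-2^{p-2}$, and what comes out is
\[
\frac{1}{p^{k-1}}\sum_{i=0}^{p^k-1}g(i)\not\equiv 0 \pmod p,
\]
which is \emph{not} condition~(2) of the statement being proved ($\not\equiv 2^{p-2}$). You correctly flagged the bookkeeping of this constant as ``the main obstacle'' but then asserted it closes; it does not, and no amount of care will make it close, because the printed statement is inconsistent with Theorem~\ref{poln} combined with your (correct) identity.

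The source of the mismatch is an arithmetic slip in the paper's own proof: there $\frac{1}{p^k}\sum_{x=0}^{p^k-1}(x-x_0)$ is evaluated as $\frac{p^{k-1}-1}{2}$, whereas $\sum_{x=0}^{p^k-1}(x-x_0)=\frac{p^k(p^k-p)}{2}$, so the correct value is $\frac{p(p^{k-1}-1)}{2}\equiv 0\pmod p$; the dropped factor of $p$ is precisely what generates the spurious threshold $2^{p-2}$. That the printed condition cannot be right is seen already for $p=3$, $f(x)=x+2$: this is an ergodic translation by a unit, $\varphi_0(x_0)=(x_0+2)\bmod 3$ is transitive, and the (unique) compatible $g$ in the decomposition $f(x)=\varphi_0(x_0)+(x-x_0)+3g(x)$ takes the values $0,1,1$ as $x_0=0,1,2$, whence $\frac{1}{3^{k-1}}\sum_{i=0}^{3^k-1}g(i)\equiv 2\equiv 2^{p-2}\pmod 3$ for every $k$ --- so the stated condition~(2) fails although $f$ is ergodic. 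In short, your derivation, carried out consistently, proves the corrected theorem with condition~(2) reading $\not\equiv 0\pmod p$ (this is also the version that keeps Corollary~\ref{leman} consistent once the carries of $\varphi_0(x_0)$ are absorbed into $g$, where one gets $\frac{1}{p^{k-1}}\sum g\equiv c$); it does not, and cannot, yield the statement as printed, and neither does the paper's own computation.
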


\begin{proof}
Consider  coordinate representation of the function $f.$ From the Lemma \ref{bezcoord} it follows that the functions $\varphi_k$ have the form of linear polynomials, i.e. 
$\varphi_k(x_0,x_1,\ldots, x_k)=x_k+\alpha_k(x_0,\ldots, x_{k-1})=x_k+\alpha_k(\bar{x}),$ $k=1,2,\ldots .$
Thus the function $f$ satisfies the conditions of Theorem \ref{poln}. For the proof  it is sufficient to obtain an expression for
$2^{p-2}+\frac{1}{p^k} \cdot \sum_{i=0}^{p^k-1} f(i)  \bmod p,$ $k=2,3,\ldots$ and use Theorem \ref{poln}.
Setting $x=x_0+px_1+\ldots +p^{k-1}x_{k-1},$ we see that for $k=2,3,\ldots$
\begin{align*}
&2^{p-2}+\frac{1}{p^k} \cdot \sum_{x=0}^{p^k-1} f(x) \equiv \\
&\equiv 2^{p-2}+\frac{1}{p^k} \left(p^{k-1}\sum_{x_0=0}^{p-1} \varphi_0(x_0) + \sum_{x=0}^{p^k-1} (x-x_0) + p\sum_{x=0}^{p^k-1} g(x)\right) \equiv \\
&\equiv 2^{p-2}+ \\
&+\left(\frac{p-1}{2}+\frac{p^{k-1}-1}{2}\right) + \frac{1}{p^{k-1}} \sum_{x=0}^{p^k-1} g(x)\equiv \\
&\equiv -2^{p-2}+\frac{1}{p^{k-1}} \sum_{x=0}^{p^k-1} g(x) \bmod p.
\end{align*}
\end{proof}

\begin{corollary}[see Lemma 4.41, p.112, \cite{ANKH}]
\label{leman}
Let $f\:\Z_p\>\Z_p,$ $f(x)=c+x+p(h(x+1)-h(x)),$ where $h\:\Z_p\>\Z_p$ is compatible function and $p\neq 2.$ If $c\neq 0 \bmod p,$ then $f$ is an ergodic function.
\end{corollary}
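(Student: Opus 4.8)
The plan is to present $f$ as an instance of the functions treated by Theorem \ref{prop3} and then verify its two conditions; it will turn out that only the first carries genuine content. I would read $f$ off in the normalized form $f(x)=\varphi_0(x_0)+(x-x_0)+pg(x)$ by taking $\varphi_0$ to be the translation $x_0\mapsto x_0+c\bmod p$ on $\Z/p\Z$ and $g(x)=h(x+1)-h(x)$. The preliminary check is that $g$ is admissible: the shift $x\mapsto x+1$ is an isometry and $h$ is compatible (1-Lipschitz), so $h(x+1)$ is 1-Lipschitz, and the strong triangle inequality makes the difference $g=h(x+1)-h(x)$ 1-Lipschitz as well, hence compatible. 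This places $f$ under the hypotheses of Theorem \ref{prop3}.

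Condition (1) is immediate. On $\Z/p\Z$ the translation $x_0\mapsto x_0+c$ is a single cycle of length $p$ exactly when $c$ is invertible modulo $p$, and since $p$ is prime this amounts to $c\not\equiv0\bmod p$. Thus the standing assumption $c\neq0\bmod p$ is precisely transitivity of $\varphi_0$.

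The heart of the matter is condition (2), namely
\[
\frac{1}{p^{k-1}}\sum_{i=0}^{p^k-1}g(i)\neq 2^{p-2}\bmod p,\qquad k=2,3,\ldots .
\]
Here the decisive observation is that the sum telescopes,
\[
\sum_{i=0}^{p^k-1}\bigl(h(i+1)-h(i)\bigr)=h(p^k)-h(0).
\]
Since $h$ is compatible it preserves the congruence modulo $p^k$, so $h(p^k)\equiv h(0)\bmod p^k$ and the right-hand side is divisible by $p^k$; dividing by $p^{k-1}$ leaves a multiple of $p$. Hence $\frac{1}{p^{k-1}}\sum_{i=0}^{p^k-1}g(i)\equiv 0\bmod p$, and because $2^{p-2}=2^{-1}\not\equiv0\bmod p$ the required inequality $0\neq 2^{p-2}\bmod p$ holds for every $k$. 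Condition (2) is therefore automatic, and Theorem \ref{prop3} gives ergodicity of $f$.

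I expect the one substantive point to be condition (2): spotting the telescoping of $\sum_i\bigl(h(i+1)-h(i)\bigr)$ and then invoking compatibility of $h$ in the sharp form $p^k\mid h(p^k)-h(0)$ to annihilate the sum modulo $p$. The remaining steps --- matching $f$ to the normalized form and noting that a nonzero translation of $\Z/p\Z$ is a full cycle --- are routine bookkeeping.
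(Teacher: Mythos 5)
Your proof is correct and follows essentially the same route as the paper's: both verify the hypotheses of Theorem \ref{prop3} with $g(x)=h(x+1)-h(x)$, reduce condition (1) to transitivity of the translation $x_0\mapsto x_0+c$ on $\Z/p\Z$, and dispose of condition (2) by telescoping the sum to $h(p^k)-h(0)$ and invoking compatibility of $h$ to conclude $\frac{1}{p^{k-1}}\sum_{i=0}^{p^k-1}g(i)\equiv 0\not\equiv 2^{p-2} \bmod p$. The only difference is cosmetic: you additionally spell out that $g$ is $1$-Lipschitz (hence compatible) and that $f$ fits the normalized form of Theorem \ref{prop3}, steps the paper leaves implicit.
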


\begin{proof}
As $f(x)\bmod p\equiv x_0+c \; (\bmod \; p),$ where $x=x_0+px_1+\ldots ,$ then $f(x)\bmod \; p$ is transitive permutation once $c\neq 0 \bmod \; p.$
From compatibility of $h$ follows that $h(p^k)-h(0)\equiv 0 \bmod p^k.$ Then for $k=2,3,\ldots$
$$\frac{1}{p^{k-1}} \sum_{i=0}^{p^k-1} (h(i+1)-h(i))\equiv \frac{h(p^k)-h(0)}{p^k-1}\equiv 0\neq 2^{p-2} (\bmod p).$$
Thus the function $f$ is ergodic by  Theorem \ref{prop3}.
\end{proof}

\begin{corollary}[Corollary 4.42, p.113, \cite{ANKH}]
\label{leman2}
Let $f\:\Z_p\>\Z_p,$ $f(x)=c+r\cdot x+p(h(x+1)-h(x)),$ where $h\:\Z_p\>\Z_p$ is a compatible function and $p\neq 2.$ 
If $c\neq 0 \bmod p$ and $r\equiv 1 \bmod p,$ then $f$ is an ergodic function.
\end{corollary}

\begin{proof}
As $f(x)\bmod\;  p\equiv x_0+c (\bmod \; p),$ where $x=x_0+px_1+\ldots ,$ then $f(x)\bmod p$ is transitive permutation once $c\neq 0 \bmod\;  p$ and $r\equiv 1 \bmod \; p.$
From compatibility of $h$ follows that $h(p^k)-h(0)\equiv 0 \bmod p^k.$ Then for $k=2,3,\ldots$ and $r=1+p\cdot \check{r}$ we have
\begin{align*}
&2^{p-2}+\frac{1}{p^k}\cdot \sum_{x=0}^{p^k-1} f(x) \equiv 2^{p-2}+\frac{1}{p^k}\left(c\cdot p^k+r\cdot\frac{p^k(p^k-1)}{2} + p(h(p^k)-h(0))\right)\equiv \\
&\equiv 2^{p-2}+c+ r\cdot\frac{p^k-1}{2} \equiv c-p\cdot \check{r}\equiv c \bmod p.
\end{align*}

By assumption, $c\neq 0 \bmod p,$ therefore, $2^{p-2}+\frac{1}{p^k}\cdot \sum_{x=0}^{p^k-1} f(x) \neq 0 \bmod p,$ $k=2,3,\ldots .$
Then the function $f$ is ergodic by Theorem \ref{poln}.
\end{proof}

Thus Theorems \ref{poln} and \ref{prop3} describe  ergodic functions from the following class. We consider such functions, in which the coordinate functions have the form
$\varphi_k(x_0,x_1,\ldots, x_k)=x_k+\alpha_k(x_0,\ldots, x_{k-1})=x_k+\alpha_k(\bar{x}),$ $k=1,2,\ldots .$
In other words, here all permutations $\varphi_{k,\bar{x}_{k-1}} \in S_p,$ $\bar{x}_{k-1}\in \left\{0,\ldots,p^{k-1}-1\right\},$ $k=2,3,\ldots $
 are given by linear polynomials $x+c$ over $\Z/p\Z.$

By Lemma \ref{bezcoord} such functions can be represented in the equivalent form
$f(x)=f(x_0+px_1+\ldots +p^kx_k+ \ldots)=\varphi_0(x_0)+(x-x_0)+p\cdot g(x),$
where $g\:\Z_p\> \Z_p$ - compatible function.

In Corollaries \ref{leman} and \ref{leman2} we considered the case where the function $g\:\Z_p\>\Z_p$  has the representation $g(x)=h(x+1)-h(x),$
where $h\:\Z_p\>\Z_p$ is compatible function.
For such functions (see, for example, \cite{ANKH}) conditions of  ergodicity were obtained. In  Corollaries \ref{leman} and \ref{leman2} we present new proof of these results.

Note that  the functions from Corollaries \ref{leman} and \ref{leman2} (or [ANKH])  contained in the class of functions considered in Theorems \ref{poln} and \ref{prop3}.
In other words, Theorems \ref{poln} and \ref{prop3} generalize the results of  \cite{ANKH}.

Theorem 9.20 (part 2) [p.286, \cite{ANKH}] describes  compatible ergodic $2$-adic functions of the form:
$f(x_0+2x_1+\ldots+2^kx_k+\ldots)=c+a_0\cdot x_0+a_1\cdot 2x_1+\ldots+a_k\cdot 2^kx_k+\ldots ,$ where $a_k\in \Z_2,$ $k=0,1,\ldots,$ $c\in \Z_2.$
Note that the coefficients before $x_k$ provide compatibility of the function $f.$

The next statement describes the  $p$-adic functions of such form for all odd $p.$

\begin{theorem}
\label{ergan}
Let  $f\:\Z_p\>\Z_p$ ($p\neq 2$) be a compatible function of the form
$$
f(x_0+px_1+\ldots+p^kx_k+\ldots)=c+a_0\cdot x_0+a_1\cdot px_1+\ldots+a_k\cdot p^kx_k+\ldots,
$$ where $a_k\in \Z_p,$ $k=0,1,\ldots,$ $c\in \Z_p.$
The function $f$ is ergodic if and only if $c\neq 0 \bmod p$ and $a_k\equiv 1 \bmod p,$ $k=0,1,\ldots .$
\end{theorem}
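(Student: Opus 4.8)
The strategy is to bring the function $f$ into the framework already developed in Theorem \ref{prop3} and its corollaries, and then to read off the ergodicity conditions directly. First I would rewrite the given linear-type function in the normal form of Lemma \ref{bezcoord}, namely $f(x)=\varphi_0(x_0)+(x-x_0)+p\cdot g(x)$. To do this, note that $a_k\cdot p^k x_k$ can be split as $p^k x_k + (a_k-1)p^k x_k$, so that
\begin{align*}
f(x)&=c+\sum_{k=0}^{\infty}p^k x_k+\sum_{k=0}^{\infty}(a_k-1)p^k x_k \\
&=c+x+\sum_{k=0}^{\infty}(a_k-1)p^k x_k.
\end{align*}
The first piece $c+x$ already has the shape $\varphi_0(x_0)+(x-x_0)$ with $\varphi_0(x_0)=c+x_0$, and the remaining sum should be absorbed into $p\cdot g(x)$. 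The obstacle here is only bookkeeping: one must check that each $(a_k-1)p^k x_k$ is divisible by $p$ (immediate for $k\ge 1$) and handle the $k=0$ term $(a_0-1)x_0$ carefully, folding its ``mod $p$'' part into $\varphi_0$ and its higher part into $g$.

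Once $f$ is in this normal form, condition (1) of Theorem \ref{prop3} requires $\varphi_0(x_0)\equiv c+a_0 x_0 \pmod p$ to be a transitive (monocycle) permutation of $\Z/p\Z$. An affine map $x_0\mapsto a_0 x_0+c$ on $\Z/p\Z$ is a single $p$-cycle if and only if $a_0\equiv 1\pmod p$ and $c\not\equiv 0\pmod p$; I would verify this by the standard computation of its iterates, $x_0\mapsto a_0^{s}x_0+c(a_0^{s-1}+\ldots+1)$, and observing that transitivity forces the linear coefficient to be $1$ and the shift to be nonzero. This already extracts the conditions $c\neq 0\bmod p$ and $a_0\equiv 1\bmod p$.

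For the remaining coordinates I would feed the $g$ obtained above into condition (2) of Theorem \ref{prop3}, which demands $\frac{1}{p^{k-1}}\sum_{i=0}^{p^k-1}g(i)\neq 2^{p-2}\bmod p$ for $k=2,3,\ldots$. The key step is to evaluate this sum explicitly using $g(x)\equiv\sum_{k\ge 1}(a_k-1)p^{k-1}x_k$ together with the elementary identity $\sum_{x=0}^{p^k-1}x_k=\frac{p-1}{2}\,p^{k-1}$ (each digit value $0,\ldots,p-1$ occurs $p^{k-1}$ times as $x$ ranges over a block of length $p^k$). Carrying out the summation and reducing modulo $p$, the contributions from $a_1,\ldots,a_{k-1}$ should collapse to a controlled expression, and the whole condition (2) should turn out to be equivalent to $a_k\equiv 1\pmod p$ for each $k$. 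The main obstacle will be this arithmetic: keeping track of which powers of $p$ survive after division by $p^{k-1}$ and reduction mod $p$, so that the digit-sum identities isolate precisely the coefficient $a_k-1$ at level $k$ while the lower-index terms either vanish or combine with the $2^{p-2}$ term. After matching both directions—ergodicity $\Rightarrow$ the conditions via the necessity halves of Theorems \ref{prop3} and the transitivity of $\varphi_0$, and the conditions $\Rightarrow$ ergodicity via sufficiency—the equivalence follows.
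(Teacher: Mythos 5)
Your sufficiency half is workable and is close in spirit to the paper's proof (the paper invokes Theorem \ref{poln} directly rather than Theorem \ref{prop3}, but by Lemma \ref{bezcoord} these are the same framework). The necessity half, however, has a genuine gap. Theorem \ref{prop3} applies only to functions admitting a decomposition $f(x)=\varphi_0(x_0)+(x-x_0)+p\,g(x)$ with $g$ \emph{compatible}, and your candidate $g(x)=\sum_{k\ge 1}(a_k-1)p^{k-1}x_k$ (plus carry terms) is compatible if and only if $a_k\equiv 1\pmod p$ for every $k\ge 1$. Indeed, if $a_k\not\equiv 1\pmod p$, then for $x=0$, $y=p^k$ one gets $|g(x)-g(y)|_p=p^{-(k-1)}>p^{-k}=|x-y|_p$, so $g$ is not $1$-Lipschitz; equivalently, by Lemma \ref{bezcoord} the existence of such a decomposition is \emph{equivalent} to the coordinate functions having the form $x_k+\alpha_k(\bar{x}_{k-1})$, which for this $f$ is exactly the condition $a_k\equiv 1\pmod p$ that you are trying to prove. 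So deriving $a_k\equiv 1\pmod p$ ``via the necessity half of Theorem \ref{prop3}'' is circular: you cannot place $f$ within the scope of that theorem without already knowing the conclusion.

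Moreover, the gap cannot be patched inside your scheme, because the digit sums are blind to the $a_k$'s. The correct identity is $\sum_{x=0}^{p^k-1}x_j=p^{k-1}\cdot\frac{p(p-1)}{2}$ (your version drops a factor of $p$), and therefore
\begin{displaymath}
\frac{1}{p^{k-1}}\sum_{i=0}^{p^k-1}\;\sum_{j\ge 1}(a_j-1)p^{j-1}\,i_j
=\frac{p(p-1)}{2}\sum_{j=1}^{k-1}(a_j-1)p^{j-1}\equiv 0 \pmod p
\end{displaymath}
for odd $p$: every level-$j$ contribution carries the factor $p(p-1)/2\equiv 0\pmod p$, so the sum condition of Theorem \ref{prop3} retains information about $c$ only and can never ``isolate the coefficient $a_k-1$ at level $k$.'' The necessity of $a_k\equiv 1\pmod p$ has to come from the multiplicative structure, which is how the paper argues via Theorem \ref{generg2}: the coordinate functions satisfy $\varphi_k\equiv a_kx_k+l_k(\bar{x}_{k-1})$, so $F_{k,0}$ is a composition of $p^k$ affine maps and equals $a_k^{p^k}x_k+\beta_k$; a transitive permutation of $\Z/p\Z$ has no fixed point, which forces the multiplier to satisfy $a_k^{p^k}\equiv 1\pmod p$, and Fermat's little theorem ($a_k^{p^k}\equiv a_k$) then yields $a_k\equiv 1\pmod p$. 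Your analysis of $\varphi_0$ (an affine map on $\Z/p\Z$ is a $p$-cycle iff $a_0\equiv 1$ and $c\not\equiv 0$) is correct, but it settles only the case $k=0$.
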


\begin{proof}
Let a function $f\:\Z_p\>\Z_p$ be represented in the coordinate form. Then for suitable $p$-valued function $l_k(x_0,x_1,\ldots, x_{k-1})$
\begin{align*}
 \varphi_k(x_0,x_1,\ldots, x_k)&\equiv \frac{1}{p^k} (f(x_0+px_1+\ldots +p^kx_k)- \\
 &-f(x_0+px_1+\ldots +p^{k-1}x_{k-1}) (\bmod p^k)) \bmod p \equiv \\
& \equiv a_kx_k+l_k(x_0,x_1,\ldots, x_{k-1}).
\end{align*}
Let $\bar{x}_{k-1}= (x_0,\ldots, x_{k-1}).$ For any fixed $\bar{x}_{k-1}$ we consider functions \\
$\varphi_{k,\bar{x}_{k-1}}\: \Z/p\Z \> \Z/p\Z$ defined so that
$\varphi_{k,\bar{x}_{k-1}} (x)=\varphi (x,\bar{x}_{k-1}).$
Suppose that the function $f$ is ergodic. By Theorem \ref{generg2} functions
$F_{k,0}=\varphi_{k,f_{k-1}^{(p^k-1)}(0)} \circ  \varphi_{k,f_{k-1}^{(p^k-2)}(0)}\circ\ldots\circ \varphi_{k,0},$ $k=1,2,\ldots $ and $f_0=\varphi_0$
are transitive permutations on $\Z/p\Z,$ where $f_k\equiv f \bmod p^{k+1}.$
As for suitable $\beta_k\in \Z/p\Z,$ $k=1,2,\ldots $
\begin{align*}
F_{k,0}&=\left(a_k x_k+l_k(f_{k-1}^{(p^{k-1}-1)}(0))\right) \circ\ldots\circ \left(a_k x_k+l_k (0)\right)= \\
&=a_k^{p^k}\cdot x_k+\beta_k,
\end{align*}
then $a_k^{p^k}\equiv a_k\equiv 1 \bmod p.$
From transitivity of $\varphi_0=f_0\equiv f \bmod p \equiv c+a_0x_0 \bmod p$ follows that $c\neq 0 \bmod p$ and $a_0\equiv 1 \bmod p.$

Vice versa, by assumption $a_k\equiv 1 \bmod p,$ $k=1,2,\ldots .$ Then \\
$\varphi_k(x_0,x_1,\ldots, x_k)=x_k+l_k(x_0,x_1,\ldots, x_{k-1})$ for suitable $p$-valued function \\
$l_k(x_0,x_1,\ldots, x_{k-1}).$
This, in  particular, means that the function $f$ satisfy the conditions of Theorem \ref{poln}. Let us check the function $f$ on ergodicity.
As $c\neq 0 \bmod p$ and $a_0\equiv 1 \bmod p,$ then $\varphi_0=f_0\equiv f \bmod p \equiv c+x_0 \bmod p$ is transitive on $\Z/p\Z.$
Note that for $2^{-1}$ - an inverse to the residue $2$ in $\Z/p\Z,$
\begin{align*}
&\frac{1}{p^k} \sum_{i=0}^{p^k-1} f(i)=\frac{1}{p^k} \sum_{x_0,\ldots, x_{k-1}} \left(c+a_0x_0+a_1px_1+\ldots +a_{k-1}\cdot p^{k-1} x_{k-1}\right) = \\
&=\frac{1}{p^k}\left(p^kc+a_0\cdot p^{k-1}\frac{p(p-1)}{2} + a_1\cdot p^k\frac{p(p-1)}{2} +\ldots + a_{k-1}\cdot p^{2k-2}\frac{p(p-1)}{2}\right) = \\
&=c+a_0 \frac{p-1}{2} =c-a_0\cdot 2^{-1}\equiv c- 2^{-1} \bmod p.
\end{align*}
Therefore, $2^{-1}+\frac{1}{p^k} \sum_{i=0}^{p^k-1} f(i)\equiv c \neq 0 \bmod p.$ Then by Theorem \ref{poln}, the function $f$ is ergodic.
\end{proof}

\begin{theorem}
\label{ergfix}
Let $p$-adic ($p\neq 2$) compatible and measure-preserving function $f\:\Z_p\>\Z_p$ be presented in the coordinate form 
(\ref{coordf})
,
where $\varphi_k(x_0,x_1,\ldots, x_k)$ are such $p$-valued functions that 
$$\varphi_k(x_0,\ldots, x_k)=x_kA_k(x_0,\ldots,x_{S-1}) + \alpha_k(x_0,\ldots,x_{k-1})=x_kA_k(\bar{x}_{S-1})+\alpha_k(\bar{x}_{k-1}),$$
$k=S,S+1,\ldots$ for some fixed integer $S.$

The function $f$ is ergodic if and only if holds simultaneously
\begin{enumerate}
\item $f_{S-1}\equiv f \bmod p^S$ be transitive  (monocycle) permutation on the set  of residues modulo $p^S;$
\item $\prod_{i=0}^{p^S-1} A_k(i)\equiv 1 \bmod p,$ $k=S,S+1,\ldots;$
\item for $k=S+1,S+2,\ldots$ 
\begin{align*}
&2^{-1} \sum_{i=0}^{p^S-1} \tau_k^{(i)}- \sum_{i=0}^{p^S-1} \frac{\tau_k^{(i)}}{p^k} [p^{k-S}(f^{(i+1)}(0) \bmod p^S)- \\
&-\sum_{\beta=0}^{p^{k-S}-1} f(f^{(i)}(0) \bmod p^S+p^S\beta) ]\neq 0 \bmod p,
\end{align*}
and for $k=S$
$$\sum_{i=0}^{p^S-1} \frac{\tau_S^{(i)}}{p^S} \left[f(f^{(i)}(0) \bmod p^S) - f^{(i+1)}(0) \bmod p^S\right] \neq 0 \bmod p,$$
where $\tau_k^{(i)}=\prod_{j=i+1}^{p^S-1} A_k(f^{(j)}(0)),$ $i=0,1,\ldots,p^k-2$ and $\tau_k^{(i)}=1$ for $i=p^k-1.$
\end{enumerate}
\end{theorem}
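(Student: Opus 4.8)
The plan is to reduce everything to Theorem \ref{generg2} and exploit the affine structure of the coordinate subfunctions for $k\geq S$. Since $f$ is measure-preserving, Theorem \ref{measurecoord} guarantees that each subfunction $\varphi_{k,\bar{x}_{k-1}}(x_k)=A_k(\bar{x}_{S-1})x_k+\alpha_k(\bar{x}_{k-1})$ is a permutation of $\Z/p\Z$; being affine, this forces $A_k(\bar{x}_{S-1})\not\equiv 0\bmod p$. By Theorem \ref{generg2}, $f$ is ergodic iff $\varphi_0$ is transitive and every $F_{k,0}$ is transitive on $\Z/p\Z$. I would first observe that the conditions for $k=0,1,\ldots,S-1$ (that is, $\varphi_0$ transitive together with $F_{1,0},\ldots,F_{S-1,0}$ transitive) are, by the inductive structure established in the proof of Theorem \ref{generg}, exactly equivalent to condition (1), transitivity of $f_{S-1}$ on $\Z/p^S\Z$. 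It then remains to translate transitivity of $F_{k,0}$ for $k\geq S$ into conditions (2) and (3).

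Fix $k\geq S$ and assume $f_{k-1}$ is transitive modulo $p^k$ (legitimate: in the ``only if'' direction this follows from Theorem 4.23 of \cite{ANKH}, and in the ``if'' direction it is the induction hypothesis). Then $F_{k,0}$ is a composition of the $p^k$ affine maps $\varphi_{k,f^{(j)}(0)}$, $j=0,\ldots,p^k-1$, with multiplicative coefficients $a_j=A_k(f^{(j)}(0)\bmod p^S)$ and additive coefficients $b_j=\alpha_k(f^{(j)}(0)\bmod p^k)$. Composing affine maps multiplies the linear coefficients and combines the constants, so $F_{k,0}(x)=Ax+B$ with $A=\prod_{j=0}^{p^k-1}a_j$ and $B=\sum_{i=0}^{p^k-1}\bigl(\prod_{j=i+1}^{p^k-1}a_j\bigr)b_i$. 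Since $p$ is prime, $F_{k,0}$ is transitive iff $A\equiv 1$ and $B\not\equiv 0\bmod p$, because an affine map with $A\not\equiv 1$ has a fixed point and hence cannot be a $p$-cycle.

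The key simplification uses that, because $f_{S-1}$ is transitive on $\Z/p^S\Z$, the residues $f^{(j)}(0)\bmod p^S$ are periodic in $j$ with period $p^S$ and run through all of $\{0,\ldots,p^S-1\}$. Hence $A=\bigl(\prod_{i=0}^{p^S-1}A_k(i)\bigr)^{p^{k-S}}$, and since $t\mapsto t^{p^{k-S}}$ is a bijection of $(\Z/p\Z)^{\ast}$ fixing $1$ (as $\gcd(p^{k-S},p-1)=1$), we get $A\equiv 1$ iff condition (2) holds. For $B$, the same periodicity together with $\prod_{i=0}^{p^S-1}A_k(i)\equiv 1$ collapses each tail product $\prod_{j=i+1}^{p^k-1}a_j$ to $\tau_k^{(i\bmod p^S)}$, so that $B=\sum_{r=0}^{p^S-1}\tau_k^{(r)}\sum_{i\equiv r}\alpha_k(f^{(i)}(0)\bmod p^k)$, the inner sum running over $0\le i<p^k$ with $i\equiv r\bmod p^S$. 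I would then evaluate the inner sum: compatibility forces $f_{k-1}$ to map the cylinder $\{y\equiv f^{(r)}(0)\bmod p^S\}$ bijectively onto $\{z\equiv f^{(r+1)}(0)\bmod p^S\}$, and transitivity of $f_{k-1}$ identifies the orbit points with exactly this cylinder, so the inner sum becomes $\sum_{\beta=0}^{p^{k-S}-1}\alpha_k(f^{(r)}(0)\bmod p^S+p^S\beta)$; expanding $f$ in coordinates and extracting the coefficient of $p^k$, exactly as in the proof of Theorem \ref{poln}, expresses this localized $\alpha_k$-sum through the block sums $\sum_\beta f(\cdot)$ of condition (3). Carrying out this step would yield the displayed formulas, with the constant $\tfrac{p^{k-S}-1}{2}$ producing the extra term $2^{-1}\sum_r\tau_k^{(r)}$ when $k>S$ and vanishing when $k=S$, which is precisely why the two cases look different.

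The main obstacle will be this final bookkeeping: correctly tracking the tail products $\tau_k^{(i)}$ through the affine composition and evaluating the localized coordinate sums over the cylinders, while keeping the half-sum term $\tfrac{p^{k-S}-1}{2}$ separate in the two regimes $k=S$ and $k>S$. Once $B$ is put in the stated form, the equivalence ``$F_{k,0}$ transitive $\iff$ (2) and (3)'' is immediate, and the ``if'' direction is completed by induction on $k$: transitivity of all $F_{k,0}$ propagates transitivity of $f_k$ modulo $p^{k+1}$, whence ergodicity follows by Proposition 4.35 of \cite{ANKH}.
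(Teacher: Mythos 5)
Your proposal is correct and follows essentially the same route as the paper's own proof: reduction to Theorem \ref{generg2}, writing $F_{k,0}$ as the affine map $Q_k x_k + L_k$, using the period-$p^S$ structure of the $A_k$-values along the orbit together with Fermat's little theorem to extract condition (2), identifying orbit points in a residue class mod $p^S$ with the corresponding cylinder to evaluate the $\alpha_k$-sums via block sums of $f$ (which yields condition (3), with the half-sum term $\tfrac{p^{k-S}-1}{2}$ explaining the separate $k=S$ case), and handling $k<S$ via transitivity of $f_{S-1}$. The bookkeeping you flag as the main obstacle is exactly what the paper carries out, and your outline of it is accurate.
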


\begin{proof} As always, we denote as
Via $\varphi_{k,\bar{x}_{k-1}}\: \Z/p\Z \> \Z/p\Z,$ $k=1,2,\ldots,$ the functions obtained 
from  the coordinate functions $\varphi_k(x_0,x_1,\ldots, x_k)$ by fixing the first $k-1$variables 
$x_0,x_1,\ldots, x_{k-1}.$ We will assume that the vector $(x_0,x_1,\ldots, x_{k-1})$ sets a residue  modulo $p^k,$ i.e. $x_0+px_1+\ldots +p^{k-1}x_{k-1}\in \left\{0,1,\ldots , p^k-1\right\}.$ 
 Since $f$ preserves the measure, then by Theorem \ref{measurecoord} functions $\varphi_{k,\bar{x}_{k-1}}$ and $\varphi_0$ are permutations on $\Z/p\Z.$
Note that for $k=S,S+1,\ldots$
\begin{align*}
&F_{k,0}=\varphi_{k,f_{k-1}^{(p^k-1)}(0)} \circ  \varphi_{k,f_{k-1}^{(p^k-2)}(0)}\circ\ldots\circ \varphi_{k,0}=\\
&\left(A_k(f_{k-1}^{(p^k-1)}(0))\cdot x_k + \alpha_k(f_{k-1}^{(p^k-1)}(0))\right) \circ\ldots\circ \left(A_k(0)\cdot x_k+\alpha_k(0)\right)= \\
&x_k\left(\prod_{i=0}^{p^k-1} A_k(f_{k-1}^{(i)}(0))\right)+ \\
&+\left(\alpha_k(f_{k-1}^{(p^{k-1}-1)}(0)) + \sum_{i=0}^{p^{k-1}-2} \alpha_k (f_{k-1}^{(i)}(0))\cdot \left(\prod_{j=i+1}^{p^k-1} A_k(f_{k-1}^{(j)}(0))\right)\right)= \\
&=x_k\cdot Q_k+L_k.
\end{align*}

By induction on the $k=S,S+1,\ldots$ we show that $F_{k,0}=\varphi_{k,f_{k-1}^{(p^k-1)}(0)} \circ\ldots\circ \varphi_{k,0}$ are transitive permutations on $\Z/p\Z.$

From compatibility and transitivity of the function $f$ modulo $p^S$ follows that $f_{S-1}^{(p^S)}(0)\equiv 0 \bmod p^S$ and $f_{S+m-1}^{(p^{m+S})}(0)\equiv 0 \bmod p^S,$
$m=1,2,\ldots$ for $k=S, S+1,\ldots .$
As by assumption $p$-valued functions $A_k$  depend only on variables $x_0,x_1,\ldots, x_{S-1},$ then any sequence 
$\left\{A_k(0),A_k(f_{k-1}(0)),\ldots,A_k(f_{k-1}^{(p^k-1)}(0))\right\},$ $k=S,S+1,\ldots$ has period of the length $p^S.$
Then from the second condition of Theorem follows that
$$Q_k=\prod_{i=0}^{p^k-1} A_k(f_{k-1}^{(i)}(0))=\prod_{i=0}^{p^k-1} A_k(i)=\left(\prod_{i=0}^{p^S-1} A_k(i)\right)^{p^{k-S}}=1.$$
And 
\begin{align*}
L_k&= \alpha_k(f_{k-1}^{(p^k-1)}(0)) + \sum_{i=0}^{p^k-2} \alpha_k (f_{k-1}^{(i)}(0))\cdot \left(\prod_{j=i+1}^{p^k-1} A_k(f_{k-1}^{(j)}(0))\right)= \\
&=\sum_{\alpha=0}^{p^{k-S}-1} \alpha_k(f_{k-1}^{(p^S-1+P^S\alpha)}(0))+ \\
&+ \sum_{i=0}^{p^S-2} \left(\prod_{j=i+1}^{p^S-1} A_k(f_{k-1}^{(j)}(0))\right) \left(\sum_{\alpha=0}^{p^{k-S}-1} \alpha_k(f_{k-1}^{(i+P^S\alpha)}(0))\right).
\end{align*}

Let $\tau_k^{(i)}=\prod_{j=i+1}^{p^S-1} A_k(f_{k-1}^{(j)}(0)),$ $i=0,1,\ldots,p^{k-1}-2,$ $k=S,S+1,\ldots,$ then
$$L_k=\sum_{\alpha=0}^{p^{k-S}-1} \alpha_k(f_{k-1}^{(p^S-1+P^S\alpha)}(0)) + \sum_{i=0}^{p^S-2} \tau_k^{(i)} \left(  \sum_{\alpha=0}^{p^{k-S}-1} \alpha_k(f_{k-1}^{(i+P^S\alpha)}(0))\right).$$
Thus, $F_{k,0}=x_k+L_k,$ $k=S,S+1,\ldots.$

Suppose we have already shown that $F_{k-1,0}$ is transitive on $\Z/p\Z,$ and $f_{k-1}\equiv f \bmod p^k$ is transitive on $\Z/p^k\Z.$
Let us check that $F_{k,0}=x_k+L_k$ is transitive on $\Z/p\Z.$
It is enough to show that $L_k\neq 0.$ As $f$ is a compatible function and by the induction $f_{k-1}\equiv f \bmod p^k$ is transitive on $\Z/p^k\Z,$
then for $i\in \left\{0,1,\ldots,p^S-1\right\}$
\begin{eqnarray}
\label{eq1}
\left\{f_{k-1}^{(i+P^S\alpha)}(0) \bmod p^k|\alpha=0,1,\ldots,p^{k-S}-1\right\}= \nonumber\\
=\left\{f_{k-1}^{(i)}(0) \bmod p^S +p^S\cdot \beta|\beta=0,1,\ldots,p^{k-S}-1\right\},
\end{eqnarray}
and $f_{k-1}^{(i)}(0)\equiv f_{S-1}^{(i)}(0) \bmod p^S.$ Then
\begin{align*}
&\sum_{\alpha=0}^{p^{k-S}-1} \alpha_k(f_{k-1}^{(i+P^S\alpha)}(0))= \sum_{\beta=0}^{p^{k-S}-1} \alpha_k\left(f_{S-1}^{(i)}(0) \bmod p^S + p^S\beta\right)=\\
&=\sum_{\beta=0}^{p^{k-S}-1} \alpha_k\left(f^{(i)}(0) \bmod p^S + p^S\beta\right)
\end{align*}
and
\begin{align*}
L_k&= \sum_{\alpha=0}^{p^{k-S}-1}  \alpha_k\left(f^{(p^S-1)}(0) \bmod p^S + p^S\alpha\right) +\\
&+  \sum_{i=0}^{p^S-2} \tau_k^{(i)} \left( \sum_{\alpha=0}^{p^{k-S}-1}  \alpha_k\left(f^{(i}(0) \bmod p^S + p^S\alpha\right)\right).
\end{align*}

As by assumption $A_k$ depend only on variables $x_0,x_1,\ldots,x_{S-1},$ then
$$A_k(f_{k-1}^{(i)}(0))=A_k(f_{k-1}^{(j)}(0) \bmod p^S)=A_k(f_{S-1}^{(j)}(0)).$$
And, therefore, for  $i\in \left\{0,1,\ldots,p^{k-1}-2\right\}$
$$\tau_k^{(i)}=\prod_{j=i+1}^{p^S-1} A_k(f_{S-1}^{(j)}(0))=\prod_{j=i+1}^{p^S-1} A_k(f^{(j)}(0)).$$

For the case $k>S$ we introduce auxiliary notations
\begin{align*}
&\bar{\Delta}=(\Delta_0,\Delta_1,\ldots,\Delta_{S-1}),  \bar{\beta}=(\beta_0,\beta_1,\ldots,\beta_{k-S-1}), \\
&\Delta=\Delta_0+p\Delta_1+\ldots+p^{S-1}\Delta_{S-1} \in \left\{0,1,\ldots,p^S-1\right\}, \\
&\beta=\beta_0+p\beta_1+\ldots+p^{k-S-1}\beta_{k-S-1} \in \left\{0,1,\ldots,p^{k-S-1}-1\right\}.
\end{align*}

Using coordinate representation of the function $f$ and relation \ref{eq1} we obtain  
\begin{align*}
&\sum_{\beta=0}^{p^{k-S}-1} f(\Delta+p^S\beta)= \sum_{\beta=0}^{p^{k-S}-1} \left(\sum_{i=0}^{k-1} pî\varphi_i+p^k\varphi_k(\bar{\Delta},\bar{\beta},0)\right)= \\
&=\sum_{\beta=0}^{p^{k-S}-1} f(\Delta+p^S\beta) \bmod p^k + p^k\sum_{\beta=0}^{p^{k-S}-1} \alpha_k(\Delta+p^S\beta)\equiv \\
&\equiv p^{k-S}f(\Delta)\bmod p^S + p^S\frac{p^{k-S}(p^{k-S}-1)}{2} + p^k\sum_{\beta=0}^{p^{k-S}-1} \alpha_k(\Delta+p^S\beta) \bmod p^{k+1}.
\end{align*}

Then, for $2^{-1}=\frac{1-p}{2},$ $p\neq 2,$ we obtain
\begin{eqnarray}
\label{eq2}
&\sum_{\beta=0}^{p^{k-S}-1} \alpha_k(\Delta+p^S\beta)\equiv \nonumber\\
&\equiv 2^{-1}-\frac{1}{p^k} \left(p^{k-S}(f(\Delta)\bmod p^S) - \sum_{\beta=0}^{p^{k-S}-1} f(\Delta+p^S\beta)\right) \bmod p.
\end{eqnarray}

In the case $k=S,$ 
$$f(\Delta)=\sum_{i=0}^{S-1} p^i \varphi_i+p^S\varphi_S(\bar{\Delta},0)\equiv f(\Delta) \bmod p^S + p^S\alpha_S(\Delta)\bmod p^{S+1},$$
or, respectively, 
\begin{equation}
\label{eq3}
\alpha_S(\Delta)= \left(f(\Delta)-f(\Delta) \bmod p^S \right) \bmod p.
\end{equation}

This relation we will use in the proof of  the base of induction $k=S.$

Thus, substituting \ref{eq2} in the relation for $L_k,$ and assuming $\tau_k^{(p^S-1)}=1,$ we obtain
\begin{align*}
&L_k=2^{-1}\cdot \sum_{i=0}^{p^S-1} \tau_k^{(i)} - \sum_{i=0}^{p^S-1} \frac{\tau_k^{(i)}}{p^k} \cdot \\
&\cdot \left[p^{k-S} (f^{i+1}(0) \bmod p^S) - \sum_{\beta=0}^{p^{k-S}-1} f(f^{i}(0) \bmod p^S +p^S\beta)\right] \bmod p.
\end{align*}

Then by the third condition of this Theorem $L_k\neq 0.$ And the function $F_{k,0}=x_k+L_k$ is transitive on $\Z/p\Z,$ and moreover,
$f_k\equiv f \bmod p^{k+1}$ is transitive on $\Z/p^{k+1}\Z.$

Let us show that $F_{S,0}$ is transitive on $\Z/p\Z,$ and $f_S\equiv f \bmod p^{S+1}$ is transitive on $\Z/p^{S+1}\Z.$
As stated above, $F_{S,0}=x_S+L_S,$ where for $i=0,1,\ldots,p^{S-1}-2$
$$L_S= \alpha_S(f_{S-1}^{(p^S-1}(0)) + \sum_{i=0}^{p^{S-1}-2} \tau_S^{(i)} (\alpha_S(f_{S-1}^{(i)}(0))),$$
$$\tau_S^{(i)}=\prod_{j=i+1}^{p^S-1} A_S(f^{(j)}(0)).$$ 

Set $\tau_S^{(p^S-1)}=1,$ and from relation \ref{eq3}, we obtain
$$L_S= \sum_{i=0}^{p^S-1} \frac{\tau_S^{(i)}}{p^S} \cdot \left[f\left( f^{i}(0) \bmod p^S\right) - f^{i+1}(0) \bmod p^S\right] \bmod p.$$

From the third condition of this Theorem follows that $L_S\neq 0,$ i.e. $F_{S,0}=x_S+L_S$ is transitive on $\Z/p\Z.$
As by initial conditions $f_{S-1}\equiv f \bmod p^S$ is transitive on $\Z/p^S\Z$ and $F_{S,0}$ is transitive on $\Z/p\Z,$ 
then $f_S\equiv f \bmod p^{S+1}$ is transitive on $\Z/p^{S+1}\Z.$

Thus, permutations $F_{k,0}=\varphi_{k,f_{k-1}^{(p^k-1)}(0)} \circ  \varphi_{k,f_{k-1}^{(p^k-2)}(0)}\circ\ldots\circ \varphi_{k,0}$ are transitive on $\Z/p\Z$ for $k=S,S+1,\ldots .$

Let us show that $\varphi_0$ and $F_{k,0},$ $k\in \left\{1,2,\ldots,S-1\right\}$  are transitive on $\Z/p\Z.$
As $f$ is compatible function and $f_{S-1}\equiv f \bmod p^S$ is transitive on $\Z/p^S\Z,$ then $f_k\equiv f \bmod p^{k+1},$ $k\in \left\{0,\ldots,S-2\right\}$  is transitive on $\Z/p^{k+1}\Z.$
Then $f_k^{(p^k)} (p^kx_k)=p^kF_{k,0}(x_k),$ $x_k\in \left\{0,\ldots,p-1\right\},$ $k=1,\ldots,S-1.$
As functions $f_k$ are transitive on $\Z/p^{k+1}\Z,$ then $F_{k,0}$ is transitive on $\Z/p\Z.$

Thus, $f_0=\varphi_0$ and $F_{k,0}$ is transitive on $\Z/p\Z.$ Then the function $f$ is ergodic by Theorem \ref{generg}.

Vice versa, let $f$ be an ergodic function. Then from Theorem \ref{generg}, in particular, follows that $f_{S-1}\equiv f \bmod p^S$ is transitive on $\Z/p^S\Z$
(particularly, the first condition of this Theorem is satisfied). And $F_{k,0}$ is transitive on $\Z/p\Z$ for $k=S,S+1,\ldots .$

As shown above, $F_{k,0}=Q_kx_k+L_k,$ $k=S,S+1,\ldots ,$ where $$Q_k=\left(\prod_{i=0}^{p^S-1} A_k(i)\right)^{p^{k-S}}$$ and $L_k$ are set as in the third condition of this Theorem.

From transitivity of $F_{k,0}=Q_kx_k+L_k$ follows that $Q_k=1,$ $L_k\neq 0.$ Indeed, suppose that $L_k=0.$ Then $F_{k,0}(0)=0,$ which contradict with transitivity of $F_{k,0}.$ Suppose that for $L_k\neq 0$ the value $Q_k\neq 1.$ Then the congruence $Q_kx_k+L_k=x_k \bmod p$ has solution, and therefore, the permutation $F_{k,0}$ has a fixed point.  This contradicts transitivity of $F_{k,0}.$

Relation $L_k\neq 0,$ $k=S,S+1,\ldots $ establishes the validity of the third condition of this Theorem. To verify the second condition we note that for $k=S,S+1,\ldots $
$$1=Q_k=\left(\prod_{i=0}^{p^S-1} A_k(i)\right)^{p^{k-S}}\equiv \prod_{i=0}^{p^S-1} A_k(i) \bmod p.$$
\end{proof}

Theorem \ref{ergfix} can be stated in the following equivalent form.

\begin{theorem}
\label{ergfixeq}
Let $p$-adic ($p\neq 2$) compatible and measure-preserving function $f\:\Z_p\>\Z_p$ be presented in the
 coordinate form (\ref{coordf}),
where $\varphi_k(x_0,x_1,\ldots, x_k)$ are such $p$-valued functions that 
$$\varphi_k(x_0,\ldots, x_k)=x_kA_k(x_0,\ldots,x_{S-1}) + \alpha_k(x_0,\ldots,x_{k-1})=x_kA_k(\bar{x}_{S-1})+\alpha_k(\bar{x}_{k-1}),$$
$k=S,S+1,\ldots$ for some fixed integer $S.$

The function $f$ is ergodic if and only if holds simultaneously
\begin{enumerate}
\item $f_S\equiv f \bmod p^{S+1}$ be transitive  (monocycle) permutation on the set  of residues modulo $p^{S+1};$
\item $\prod_{i=0}^{p^S-1} A_k(i)\equiv 1 \bmod p,$ $k=S+1,S+2,\ldots;$
\item for $k=S+1,S+2,\ldots$ 
\begin{align*}
&2^{-1} \sum_{i=0}^{p^S-1} \tau_k^{(i)}- \sum_{i=0}^{p^S-1} \frac{\tau_k^{(i)}}{p^k} [p^{k-S}(f^{(i+1)}(0) \bmod p^S)- \\
&-\sum_{\beta=0}^{p^{k-S}-1} f(f^{(i)}(0) \bmod p^S+p^S\beta) ]\neq 0 \bmod p,
\end{align*}
where $\tau_k^{(i)}=\prod_{j=i+1}^{p^S-1} A_k(f^{(j)}(0)),$ $i=0,1,\ldots,p^S-2$ and $\tau_k^{(i)}=1$ for $i=p^S-1.$
\end{enumerate}
\end{theorem}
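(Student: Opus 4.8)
The plan is to deduce Theorem \ref{ergfixeq} from Theorem \ref{ergfix} by showing that the two lists of conditions are logically equivalent, rather than redoing the induction from scratch. First I would observe that conditions (2) and (3) of the two theorems coincide verbatim for every $k\geq S+1$: in both statements they are the same products $\prod_{i=0}^{p^S-1}A_k(i)$ and the same alternating sums built from $\tau_k^{(i)}$. Hence the only discrepancy lies at the boundary $k=S$. Theorem \ref{ergfix} demands transitivity of $f_{S-1}$ modulo $p^S$ together with the $k=S$ instances of its conditions (2) and (3), whereas Theorem \ref{ergfixeq} demands instead the single stronger condition that $f_S\equiv f\bmod p^{S+1}$ be transitive modulo $p^{S+1}$. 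So the whole proof reduces to verifying that condition (1) of Theorem \ref{ergfixeq} is equivalent to the conjunction of condition (1) of Theorem \ref{ergfix} with the $k=S$ instances of its conditions (2) and (3).

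To establish this equivalence I would reuse the computation already carried out in the proof of Theorem \ref{ergfix}. There it is shown that $f_S$ is transitive modulo $p^{S+1}$ precisely when $f_{S-1}$ is transitive modulo $p^S$ and the induced permutation $F_{S,0}$ on $\Z/p\Z$ is transitive, and moreover that $F_{S,0}$ is the affine map $x_S\mapsto Q_Sx_S+L_S$, where $Q_S=\prod_{i=0}^{p^S-1}A_S(i)$ and $L_S$ is exactly the sum appearing in the $k=S$ instance of condition (3). The reduction of the orbit product $\prod_{i}A_S(f_{S-1}^{(i)}(0))$ to $\prod_{i=0}^{p^S-1}A_S(i)$ uses transitivity of $f_{S-1}$ modulo $p^S$ together with the hypothesis that $A_S$ depends only on $x_0,\ldots,x_{S-1}$. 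The key elementary fact is then that an affine map $x\mapsto Qx+L$ on $\Z/p\Z$ is a single $p$-cycle if and only if $Q\equiv 1$ and $L\not\equiv 0\bmod p$: when $Q\equiv 1$ it is a translation, transitive iff $L\neq 0$, and when $Q\not\equiv 1$ it fixes $L/(1-Q)$ and so cannot be transitive. Applying this to $F_{S,0}$ shows that $F_{S,0}$ is transitive iff $Q_S\equiv 1$ and $L_S\neq 0$, i.e.\ iff the $k=S$ instances of conditions (2) and (3) of Theorem \ref{ergfix} hold.

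Combining the two facts yields the desired equivalence: condition (1) of Theorem \ref{ergfixeq} holds iff $f_{S-1}$ is transitive modulo $p^S$ and $F_{S,0}$ is transitive, iff condition (1) of Theorem \ref{ergfix} together with the $k=S$ instances of (2) and (3) all hold. Since the remaining requirements agree for $k\geq S+1$, the full condition set of Theorem \ref{ergfixeq} is equivalent to that of Theorem \ref{ergfix}, and the ergodicity criterion transfers directly. I expect no serious obstacle here; the one point requiring care is the bookkeeping that matches the quantities $Q_S$ and $L_S$ extracted from the proof of Theorem \ref{ergfix} with the explicit $k=S$ formulas of conditions (2) and (3), and in particular checking that transitivity of $f_{S-1}$ modulo $p^S$ is precisely what is needed to collapse the orbit product defining $Q_S$ into $\prod_{i=0}^{p^S-1}A_S(i)$.
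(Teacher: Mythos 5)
Your proposal is correct and follows essentially the same route as the paper: it deduces Theorem \ref{ergfixeq} from Theorem \ref{ergfix} by showing the only discrepancy is at $k=S$, where transitivity of $f_S$ modulo $p^{S+1}$ is equivalent to transitivity of $f_{S-1}$ modulo $p^S$ together with $Q_S\equiv 1$ and $L_S\neq 0 \bmod p$, using the affine form $F_{S,0}=Q_Sx_S+L_S$ extracted from the earlier proof. The paper writes out only the necessity direction explicitly and cites its own proof of Theorem \ref{ergfix} for the converse, whereas you spell out both directions via the fixed-point criterion for affine maps on $\Z/p\Z$; this is a presentational difference, not a different argument.
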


\begin{proof}
For the proof of this Theorem it is sufficient to check  that the conditions of Theorem \ref{ergfix} are equivalent  to transitivity of 
$f_S\equiv f \bmod p^{S+1}$ modulo $p^{S+1}.$ This follows from the conditions for $k=S$ of Theorem \ref{ergfix}, see its proof.
Let $f_S\equiv f \bmod p^{S+1}$ is transitive on $\Z/p^{S+1}\Z.$ Then $f_{S-1}\equiv f \bmod p^S$ is transitive on $\Z/p^S\Z,$
and $F_{S,0}$ is transitive on $\Z/p\Z.$

As was shown in the proof of Theorem \ref{ergfix}, $F_{S,0}=Q_Sx_S+L_S,$ where $Q_S=\prod_{i=0}^{p^S-1} A_S(i)$
and
$$L_S= \sum_{i=0}^{p^S-1}  \frac{\tau_S^{(i)}}{p^S} \left[f(f^{(i)}(0) \bmod p^S )- f^{(i+1)}(0) \bmod p^S \right] \bmod p,$$
where
$\tau_S^{(i)}=\prod_{j=i+1}^{p^S-1} A_S(f^{(j)}(0)),$ $i=0,1,\ldots,p^S-2,$ and $\tau_S^{(i)}=1$ for $i=p^S-1.$
As $F_{S,0}$ is transitive on $\Z/p\Z,$ then $Q_S\equiv 1 \bmod p$ and $L_S\neq 0 \bmod p.$
\end{proof}

\begin{comment}
Theorems \ref{ergfix} and \ref{ergfixeq} describe ergodic functions such that their coordinate functions $\varphi_k(x_0,\ldots,x_k)$ are given  by linear (with respect to $x_k)$ polynomials, where coefficients for $x_k$ depend on no more than $S$ variables $x_0,\ldots,x_S$ for some fixed value  $S.$
Note that, in contrast to the general criterion of ergodicity (see Theorems \ref{generg} and \ref{generg2}), where it is required to calculate the values of all iterations $f^{(i)}(0) \bmod p^k,$ $i\in \left\{0,1,\ldots,p^k-1\right\},$ $k=1,2,\ldots ,$ too check  ergodicity of functions from the class represented in Theorems \ref{ergfix}, \ref{ergfixeq}  we need to calculate the iterations for the {\it limited number of values,} namely, $f^{(i)}(0) \bmod p^S,$ $i\in \left\{0,1,\ldots,p^S-1\right\},$ for every $k=S,S+1,\ldots .$
\end{comment}

Results  of Theorems \ref{ergfix} and \ref{ergfixeq} can be used to describe the  ergodic compatible $p$-adic functions, which additionally are uniformly  differentiable modulo $p.$
As defined in Defintion 3.27 and 3.28 [p.58, 60 \cite{ANKH}] the function $f\:\Z_p\>\Z_p$ is uniformly differentiable function modulo $p,$ if for any sufficiently large integer $k$ 
(i.e. for $k>N,$ where $N$-some fixed integer), comparison
$f(x+p^kh)\equiv f(x)+p^nh\cdot \partial f(x) (\bmod p^{k+1})$
is satisfied for any $x\in \Z_p$ and $h\in \Z_p.$ Such minimal $N$ we denote via $S.$

\begin{corollary}
\label{openq}
Let $f\:\Z_p\>\Z_p$ ($p\neq 2$) be a  compatible uniformly  differentiable modulo $p$ function.
The function $f$ is ergodic if and only if

\begin{enumerate}
\item $f_S\equiv f \bmod p^{S+1}$ be transitive  (monocycle) permutation on the set  of residues modulo $p^{S+1};$
\item $\prod_{i=0}^{p^S-1} \partial f(i)\equiv 1 \bmod p;$ 
\item for $k=S+1,S+2,\ldots$ 
\begin{align*}
&2^{-1} \sum_{i=0}^{p^S-1} \tau^{(i)}- \sum_{i=0}^{p^S-1} \frac{\tau^{(i)}}{p^k} [p^{k-S}(f^{(i+1)}(0) \bmod p^S)- \\
&-\sum_{\beta=0}^{p^{k-S}-1} f(f^{(i)}(0) \bmod p^S+p^S\beta) ]\neq 0 \bmod p,
\end{align*}
where $\tau^{(i)}=\prod_{j=i+1}^{p^S-1}\partial f(f^{(j)}(0)),$ $i=0,1,\ldots,p^S-2$ and $\tau^{(i)}=1$ for $i=p^S-1.$
\end{enumerate}
\end{corollary}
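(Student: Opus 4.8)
The plan is to recognize this Corollary as the specialization of Theorem \ref{ergfixeq} to the case in which all the leading coefficients $A_k$ coincide with the reduction $\partial f \bmod p$ of the derivative, and in particular are independent of $k$. Thus the whole task reduces to two things: first, checking that a compatible function which is uniformly differentiable modulo $p$ (with parameter $S$) automatically lies in the class governed by Theorem \ref{ergfixeq}; and second, transcribing the three conditions of that theorem into the present, $k$-independent form.

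First I would extract the coordinate representation from the defining congruence. Fix $k \geq S$ and set $\bar x = x_0 + p x_1 + \ldots + p^{k-1}x_{k-1}$, a residue modulo $p^k$ whose $p$-adic digits beyond position $k-1$ vanish. Applying $f(\bar x + p^k x_k) \equiv f(\bar x) + p^k x_k\,\partial f(\bar x) \pmod{p^{k+1}}$ and using that every lower coordinate function $\varphi_j$, $j<k$, depends only on $x_0,\ldots,x_j$ and is hence unchanged by the increment $p^k x_k$, the only surviving term modulo $p^{k+1}$ is $p^k\big[\varphi_k(x_0,\ldots,x_{k-1},x_k)-\varphi_k(x_0,\ldots,x_{k-1},0)\big]$. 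Dividing by $p^k$ and reducing gives $\varphi_k(x_0,\ldots,x_k) \equiv x_k\,(\partial f(\bar x)\bmod p) + \varphi_k(x_0,\ldots,x_{k-1},0) \pmod p$. Setting $\alpha_k(\bar x_{k-1}) = \varphi_k(x_0,\ldots,x_{k-1},0)$, this exhibits precisely the affine-in-$x_k$ shape required by Theorem \ref{ergfixeq}, with leading coefficient $\partial f \bmod p$.

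The decisive point, and the one I expect to be the main obstacle, is to show that $\partial f(x)\bmod p$ depends only on $x\bmod p^S$, so that it defines a single function $A := \partial f \bmod p$ on $\Z/p^S\Z$ serving as the coefficient $A_k$ \emph{simultaneously for all} $k \geq S$. This is exactly the uniformity built into differentiability modulo $p$: the same threshold $S$ works at every base point, which forces the difference quotients $p^{-k}\big(f(x+p^k)-f(x)\big)$ to stabilize to a value that is locally constant on balls of radius $p^{-S}$. Equivalently, it is the content of the characterization of uniform differentiability modulo $p$ through coordinate functions in \cite{ANKH} (Definitions 3.27, 3.28). Granting this, $A_k = A$ is independent of $k$, and the hypotheses of Theorem \ref{ergfixeq} hold with this common $A$.

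It then only remains to transcribe the conditions. Since $A_k = A = \partial f \bmod p$ no longer depends on $k$, the family of conditions $\prod_{i=0}^{p^S-1} A_k(i) \equiv 1 \pmod p$ collapses to the single requirement $\prod_{i=0}^{p^S-1}\partial f(i)\equiv 1\pmod p$, and the weights $\tau_k^{(i)}=\prod_{j=i+1}^{p^S-1}A_k(f^{(j)}(0))$ become the $k$-independent $\tau^{(i)}=\prod_{j=i+1}^{p^S-1}\partial f(f^{(j)}(0))$, with $\tau^{(i)}=1$ for $i=p^S-1$. Conditions (1) and (3) of the Corollary are then literally conditions (1) and (3) of Theorem \ref{ergfixeq} after replacing $A_k$ and $\tau_k^{(i)}$ by $\partial f$ and $\tau^{(i)}$. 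Invoking Theorem \ref{ergfixeq} in both directions yields the claimed equivalence.
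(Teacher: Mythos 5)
Your proposal follows the paper's own proof essentially step for step: both extract the affine coordinate form $\varphi_k(x_0,\ldots,x_k)\equiv x_k\,(\partial f \bmod p)+\alpha_k(x_0,\ldots,x_{k-1})$ directly from the uniform-differentiability congruence, both reduce the decisive point---that $\partial f \bmod p$ is determined by $x \bmod p^S$ and hence serves as one $k$-independent coefficient $A$ for all $k\geq S$---to the corresponding statements in \cite{ANKH} (the paper cites Proposition 3.35 there), and both finish by transcribing the conditions and invoking Theorem \ref{ergfixeq}. The proposal is correct and takes the same route, with no substantive differences.
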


\begin{proof}
As the function $f$ is  compatible, then it can be presented in the coordinate form
$f(x_0+px_1+\ldots +p^kx_k+ \ldots)=\varphi_0(x_0)+p\varphi_1(x_0,x_1)+\ldots+p^k\varphi_k(x_0,x_1,\ldots,x_k)+\ldots,$
where $\varphi_k(x_0,x_1,\ldots, x_k)$ are $p$-valued functions. Then
$$\varphi_k(x_0,x_1,\ldots, x_k)=\frac{f(x_0+\ldots +p^kx_k)-f(x_0+\ldots +p^{k-1}x_{k-1}) (\bmod p^k)}{p^k} \bmod p.$$

As $f$ is uniformly  differentiable modulo $p$ function, then for $k>S$
\begin{align*}
&\frac{f(x_0+\ldots +p^kx_k)-f(x_0+\ldots +p^{k-1}x_{k-1})}{p^k}\equiv \\
&\equiv x_k\cdot \partial f(x_0+\ldots +p^{k-1}x_{k-1}) \bmod p.
\end{align*}

Let
\begin{align*}
&f(x_0+\ldots +p^{k-1}x_{k-1})\equiv f(x_0+\ldots +p^{k-1}x_{k-1}) \bmod p^k+ \\
&+p^k\alpha_k (x_0,x_1,\ldots, x_{k-1}) \bmod p^{k+1},
\end{align*}
where $\alpha_k (x_0,x_1,\ldots, x_{k-1})$ is $p$-valued function.

Then
$$\varphi_k(x_0,x_1,\ldots, x_k)= x_k\cdot \left(\partial f(x_0+\ldots +p^{k-1}x_{k-1}) \bmod p\right) + \alpha_k (x_0,\ldots, x_{k-1}).$$
By Proposition 3.35 [p. 63 \cite{ANKH}] the periodic function $\partial f(x_0+\ldots +p^{k-1}x_{k-1})$ has period $p^S,$ i.e.
$$\partial f(x_0+\ldots +p^{k-1}x_{k-1})\equiv \partial f(x_0+\ldots +p^{S-1}x_{S-1}) \bmod p^S,$$
and, therefore, for $k\geq S$
$$\partial f(x_0+\ldots +p^{k-1}x_{k-1})\equiv \partial f(x_0+\ldots +p^{S-1}x_{S-1}) \bmod p.$$

Denote $p$-valued function $\partial f(x_0+\ldots +p^{S-1}x_{S-1}) \bmod p,$ $k\geq S$ as \\
$A(x_0,x_1,\ldots, x_{S-1}).$ Thus, for $k\geq S,$ we obtain  
$$\varphi_k(x_0,x_1,\ldots, x_k)= x_k\cdot A(x_0,x_1,\ldots, x_{S-1}) + \alpha_k (x_0,x_1,\ldots, x_{k-1}).$$

This shows that compatible uniformly differentiable modulo $p$ functions  satisfy the conditions of Theorem \ref{ergfixeq} (as well as Theorem \ref{ergfix}).
To complete the proof it remains to use the criterion of ergodicity from Theorem \ref{ergfixeq}.
\end{proof}

\begin{comment}
At the Corollary \ref{openq} describes compatible uniformly differentiable modulo $p$ ergodic functions.
Thus, Open question 4.60 [p.132, \cite{ANKH}] are fully resolved. Note that compatible uniformly differentiable modulo $p^2$ ergodic functions are described by V. Anashin, see, for example, Theorem 4.55 [p.126 \cite{ANKH}].
\end{comment}

\section*{Acknowledgments}
 
This work is supported by the joint grant of Swedish and South-African Research Councils, ``Non-Archimedean analysis: from fundamentals 
to applications'' and the grant of the Faculty of natural Science and Engineering of Linnaeus University ``Mathematical Modeling of Complex Hierarchic Systems.''

\end{document}